\documentclass[12pt,a4paper]{article}
\usepackage{latexsym,amsfonts,amssymb,amsmath,amscd,theorem,epic,theorem,epstopdf} 
\theoremstyle{plain}
\newtheorem{lemma}{Lemma}[section]
\newtheorem{proposition}[lemma]{Proposition}

\newtheorem{remark}[lemma]{Remark}
\newtheorem{example}[lemma]{Example}
\newtheorem{theorem}[lemma]{Theorem}
\newtheorem{definition}[lemma]{Definition}
\newtheorem{corollary}[lemma]{Corollary}

{\theorembodyfont{\rmfamily}  \font\ncsc=cmcsc10
 \font\ntt=cmtt12

\begin{document}
\newcommand{\val}{\operatorname{val}}\newcommand{\Val}{\operatorname{Val}}
\newcommand{\pperp}{\hbox{$\perp\hskip-6pt\perp$}}
\newcommand{\ssim}{\hbox{$\hskip-2pt\sim$}}\newcommand{\ini}{\operatorname{ini}}
\newcommand{\aleq}{{\ \stackrel{3}{\le}\ }}\newcommand{\Vers}{\mathrm{Vers}}
\newcommand{\ageq}{{\ \stackrel{3}{\ge}\ }}\newcommand{\Tub}{\operatorname{Tub}}
\newcommand{\aeq}{{\ \stackrel{3}{=}\ }}
\newcommand{\bleq}{{\ \stackrel{n}{\le}\ }}
\newcommand{\bgeq}{{\ \stackrel{n}{\ge}\ }}
\newcommand{\beq}{{\ \stackrel{n}{=}\ }}
\newcommand{\cleq}{{\ \stackrel{2}{\le}\ }}
\newcommand{\cgeq}{{\ \stackrel{2}{\ge}\ }}
\newcommand{\ceq}{{\ \stackrel{2}{=}\ }}
\newcommand{\fm}{\mathfrak{m}}\newcommand{\bi}{\boldsymbol{i}}
\newcommand{\N}{{\mathbb N}}\newcommand{\T}{{\mathbb T}}
\newcommand{\A}{{\mathbb A}}\newcommand{\Arg}{\operatorname{Arg}}
\newcommand{\K}{{\mathbb K}}\newcommand{\Rea}{\operatorname{Re}}
\newcommand{\Z}{{\mathbb Z}}\newcommand{\F}{{\mathbf F}}
\newcommand{\R}{{\mathbb R}}
\newcommand{\C}{{\mathbb C}}
\newcommand{\Q}{{\mathbb Q}}
\newcommand{\PP}{{\mathbb P}}
\newcommand{\cA}{{\mathcal A}}
\newcommand{\cB}{{\mathcal B}}
\newcommand{\cC}{{\mathcal C}}
\newcommand{\cD}{{\mathcal D}}
\newcommand{\cF}{{\mathcal F}}
\newcommand{\cI}{{\mathcal I}}
\newcommand{\cL}{{\mathcal L}}
\newcommand{\cM}{{\mathcal M}}
\newcommand{\cR}{{\mathcal R}}
\newcommand{\cO}{{\mathcal O}}
\newcommand{\cP}{{\mathcal P}}
\newcommand{\cS}{{\Sigma}}
\newcommand{\cT}{{\mathfrak T}}
\newcommand{\mcT}{{\mathcal T}}
\newcommand{\cU}{{\mathcal U}}
\newcommand{\cZ}{{\mathcal Z}}
\newcommand{\cOK}{\mathcal{OK}}
\newcommand{\cEA}{{\mathcal EA}}
\newcommand{\cNDF}{\mathcal{NDF}}
\newcommand{\mkfO}{{\mathfrak O}}
\newcommand{\go}{O}
\newcommand{\gc}{{\mathfrak c}}
\newcommand{\newDelta}{{E}}
\newcommand{\bbz}{{x_0}}
\newcommand{\Al}{\operatorname{Ad}}\newcommand{\Men}{\operatorname{Men}}
\newcommand{\EAl}{\operatorname{EAd}}\newcommand{\Ann}{\operatorname{Ann}}
\newcommand{\red}{{\operatorname{red}}}
\newcommand{\Arn}{{\operatorname{AR}}}
\newcommand{\Pic}{{\operatorname{Pic}}}\newcommand{\Sym}{{\operatorname{Sym}}}
\newcommand{\QI}{{\operatorname{QI}}}\newcommand{\Div}{{\operatorname{Div}}}
\newcommand{\oDel}{{\widetilde\Del}}
\newcommand{\real}{{\operatorname{Re}}}\newcommand{\Aut}{{\operatorname{Aut}}}
\newcommand{\conv}{{\operatorname{conv}}}\newcommand{\Ima}{{\operatorname{Im}}}
\newcommand{\Span}{{\operatorname{Span}}}\newcommand{\Trop}{{\operatorname{Trop}}}
\newcommand{\Ker}{{\operatorname{Ker}}}\newcommand{\qind}{{\operatorname{qind}}}
\newcommand{\Cycle}{{\operatorname{Cycle}}}\newcommand{\OG}{{\operatorname{OG}}}
\newcommand{\Fix}{{\operatorname{Fix}}}\newcommand{\ina}{{\operatorname{in}}}
\newcommand{\sign}{{\operatorname{sign}}}\newcommand{\Closure}{{\operatorname{Clos}}}
\newcommand{\even}{{\operatorname{even}}}
\newcommand{\odd}{{\operatorname{odd}}}
\newcommand{\com}{{\operatorname{com}}}
\newcommand{\ncom}{{\operatorname{ncom}}}
\newcommand{\nmob}{{\operatorname{nmob}}}
\newcommand{\bound}{{\operatorname{bound}}}
\newcommand{\nbound}{{\operatorname{ends}}}
\newcommand{\Inn}{{\operatorname{In}}}
\newcommand{\Ex}{{\operatorname{Ex}}}
\newcommand{\opp}{{\operatorname{opp}}}\newcommand{\Par}{{\operatorname{Par}}}
\newcommand{\Cheb}{{\operatorname{Cheb}}}\newcommand{\arccosh}{{\operatorname{arccosh}}}
\newcommand{\Card}{{\operatorname{Card}}}
\newcommand{\alg}{{\operatorname{alg}}}
\newcommand{\ord}{{\operatorname{ord}}}
\newcommand{\mt}{{\operatorname{mult}}}
\newcommand{\cheb}{{\operatorname{Cheb}}}
\newcommand{\oi}{{\overline i}}\newcommand{\oGamma}{{\overline\Gamma}}
\newcommand{\oj}{{\overline j}}\newcommand{\oh}{{\overline h}}
\newcommand{\ob}{{\overline b}}
\newcommand{\os}{{\overline s}}
\newcommand{\oa}{{\overline a}}
\newcommand{\oy}{{\overline y}}
\newcommand{\ow}{{\overline w}}
\newcommand{\ot}{{\overline t}}
\newcommand{\oz}{{\overline z}}
\newcommand{\eps}{{\varepsilon}}
\newcommand{\proofend}{\hfill$\Box$\bigskip}
\newcommand{\Int}{{\operatorname{Int}}}
\newcommand{\pr}{{\operatorname{pr}}}
\newcommand{\Hom}{{\operatorname{Hom}}}
\newcommand{\Ev}{{\operatorname{Ev}}}
\newcommand{\im}{{\operatorname{Im}}}\newcommand{\br}{{\operatorname{br}}}
\newcommand{\sk}{{\operatorname{sk}}}\newcommand{\DP}{{\operatorname{DP}}}
\newcommand{\const}{{\operatorname{const}}}
\newcommand{\Sing}{{\operatorname{Sing}}\hskip0.06cm}
\newcommand{\conj}{{\operatorname{Conj}}}
\newcommand{\Cl}{{\operatorname{Cl}}}
\newcommand{\Crit}{{\operatorname{Crit}}}
\newcommand{\Ch}{{\operatorname{Ch}}}
\newcommand{\discr}{{\operatorname{discr}}}
\newcommand{\Tor}{{\operatorname{Tor}}}
\newcommand{\Conj}{{\operatorname{Conj}}}
\newcommand{\Log}{{\operatorname{Log}}}
\newcommand{\vol}{{\operatorname{vol}}}
\newcommand{\defect}{{\operatorname{def}}}
\newcommand{\codim}{{\operatorname{codim}}}
\newcommand{\tmu}{{\C\mu}}
\newcommand{\wt}{{\operatorname{wt}}}
\newcommand{\ov}{{\overline v}}
\newcommand{\ox}{{\overline{x}}}
\newcommand{\bw}{{\boldsymbol w}}
\newcommand{\hbw}{{\widehat\bw}}
\newcommand{\mfw}{{\mathfrak{w}}}
\newcommand{\bv}{{\boldsymbol v}}
\newcommand{\bn}{{\Phi}}
\newcommand{\bx}{{\boldsymbol x}}
\newcommand{\bd}{{\boldsymbol d}}
\newcommand{\bz}{{\boldsymbol z}}
\newcommand{\bL}{{\boldsymbol L}}
\newcommand{\bP}{{\boldsymbol P}}
\newcommand{\bp}{{\boldsymbol p}}
\newcommand{\bq}{{\boldsymbol p_{tr}}}
\newcommand{\be}{{\boldsymbol e}}
\newcommand{\bc}{{\boldsymbol c}}
\newcommand{\ba}{{\boldsymbol a}}
\newcommand{\bb}{{\boldsymbol b}}
\newcommand{\tet}{{\theta}}
\newcommand{\Area}{\operatorname{Area}}
\newcommand{\Del}{{\Delta}}
\newcommand{\bet}{{\beta}}
\newcommand{\kap}{{\kappa}}
\newcommand{\del}{{\delta}}
\newcommand{\sig}{{\sigma}}
\newcommand{\alp}{{\alpha}}
\newcommand{\Sig}{{\Sigma}}
\newcommand{\Gam}{{\Gamma}}\newcommand{\Rot}{{\operatorname{Rot}}}
\newcommand{\gam}{{\gamma}}\newcommand{\idim}{{\operatorname{idim}}}
\newcommand{\Lam}{{\Lambda}}
\newcommand{\lam}{{\lambda}}
\newcommand{\SC}{{SC}}
\newcommand{\MC}{{MC}}
\newcommand{\nek}{{,...,}}
\newcommand{\cim}{{c_{\mbox{\rm im}}}}
\newcommand{\clM}{\tilde{M}}
\newcommand{\clV}{\bar{V}}
\newcommand{\rtm}{{\mu}}

\title{Quantum index, Arnold-Rokhlin surfaces,\\ and real enumerative geometry}
\author{Ilia Itenberg
\and Eugenii Shustin}
\date{}
\maketitle
\centerline{\it To the memory of Vladimir Igorevich Arnold, great man and great mathematician}
\begin{abstract}
The 
main goal of this note is to relate two different Welschinger-type rules of signs that were used 
for definition
of real enumerative invariants of toric surfaces
(the invariants considered being relative to the toric boundary). 
The relation between them intertwines Mikhalkin's quantum index and geometry
of real and complex point sets of counted real curves. As a by-product,
we suggest a new definition of quantum index, which can be 
used 
for refined invariant enumeration of real algebraic curves
on surfaces in a non-toric setup.
\end{abstract} 

\medskip

{\bf MSC-2020 classification:} Primary 14N10, Secondary 14J26, 14P25 

\medskip

\section{Introduction}

The paper is devoted to enumeration of real curves
satisfying certain constraints
on real algebraic surfaces
(by a {\it real algebraic surface}, we mean a complex algebraic one equipped with an anti-holomorphic involution). 
In fact, we enumerate {\it oriented} real curves. 
An irreducible real algebraic curve $\nu:\widetilde C\to C\hookrightarrow X$ (where $\nu$ is the normalization)
on
a real algebraic surface $X$
is said to be {\it separating} if $\widetilde C\setminus\widetilde C_\R$ consists of two connected components. 
The choice of one of these components (denoted by $\widetilde C_+$)
is called
an {\it orientation}
of the separating curve. A pair $(C,C_+)$,
where $C_+=\nu(\widetilde C_+)$, is called an
{\it oriented real curve}.
The orientation induced
on the real part $C_\R$ of $C$ by the canonical orientation of $C_+$
is called {\it complex orientation}.

In his groundbreaking paper \cite{Mir}, G. Mikhalkin
introduced an integer-valued {\it quantum index}
for certain oriented real curves on toric surfaces. More precisely, the quantum index was defined 
by Mikhalkin for oriented real curves that intersect 
toric divisors only at real or purely imaginary points: under this condition, \cite[Theorem 3.1]{Mir} states
that the number ({\it aka} quantum index)
$$\QI(C_+)=\frac{1}{\pi^2}\int_{C_+}\frac{dx \wedge dy}{xy},\quad\text{where}\quad x = |u|,\ y = |v|,\ (u, v)\in(\C^*)^2,$$ 
belongs to $\frac{1}{2}\Z$. 
Mikhalkin showed that, for an appropriate kind of constraints,
a Welschinger-type signed enumeration of
oriented real rational curves ({\it cf}. \cite{W1})
in a given divisor class and with a given quantum index 
produces an invariant
which can be directly related to the numerator
of a Block-G\"ottsche refined tropical invariant (represented as a fraction with the standard denominator,
see \cite{BG}).

In \cite{ISalg}, we extended the invariance part of Mikhalkin's statement to the case
of oriented real curves of genus $g$, where $g$ is equal to $1$ or $2$. In addition, we introduced
a second Welschinger-type rule of signs (simpler than the first one
from several perspectives) also producing an invariant enumeration of oriented real curves
under consideration.

In the present paper, we show that the two rules of signs considered in \cite{ISalg}
coincide (up to some standard factor
depending only on the chosen divisor class and genus) for each individual oriented real curve
under enumeration.
As a by-product, we 
suggest a new definition of quantum index, 
which can be taken into account for enumeration of oriented real curves on real algebraic surfaces in a non-toric setup.
In some cases, this quantum index allows one to define refined real enumerative invariants
beyond the range studied in \cite{Mir,Bl25,ISalg}
(these new refined invariants will be described in a forthcoming paper).

The paper is organized as follows.
Section \ref{sec_theorem-signs} is devoted to the proof of the fact the two rules of signs
described in \cite{ISalg}
coincide.
The proof uses a reformulation of the definition of the quantum index in terms of {\it Arnold-Rokhlin surfaces}.
In Section \ref{sec-another},
we give a definition of the quantum index with the help of Arnold-Rokhlin surfaces
in the toric setting (in this case,
the definition is a reformulation of the definitions of quantum index that appeared in \cite{Mir, Bl25},
see Section \ref{sec-toric} for details) and in a non-toric setup (see Section \ref{sec-nt}).

\section{Theorem on signs}\label{sec_theorem-signs}

\subsection{Main statement}\label{sec-main}

Let $\Tor(P)$ be the complex toric surface associated with a non-degenerate convex lattice polygon $P\subset\R^2$.
Denote by $\Tor(\partial P)$ the union of the toric divisors and by ${\mathcal L}_P$ the tautological line bundle on $\Tor(P)$.
Put $D_P=c_1({\mathcal L}_P)$; we freely use the notation $|D_P|$ for the corresponding linear system
on $\Tor(P)$.
The real part $\Tor_\R(P)$ consists of four quadrants
$${\mathbf Q}(\eps_1,\eps_2)=\Closure\{(x,y)\in(\R^\times)^2\ :\ \eps_1 x > 0, \ \eps_2 y > 0\},\quad\eps_1,\eps_2=\pm,$$
where $\R^\times = \R \setminus \{0\}$ and $\Closure$ stands for the closure.
Fix the orientation of ${\mathbf Q}(+, +)$ given by the form $dx\wedge dy$,
and fix the orientations of other quadrants induced by reflections
\begin{align}
{\mathbf Q}(+, +) & \to {\mathbf Q}(\eps_1,\eps_2), \nonumber \\ 
(x,y) & \mapsto (\eps_1 x, \eps_2 y) \nonumber 
\end{align} 
(in the same way as in \cite{Mir}).
These orientations are said to be {\it canonical}.
Denote by $P^1$ the set of sides of $P$.

In \cite{ISalg}, we introduced two series of real enumerative invariants 
(the first of these series in the case of rational curves was introduced by Mikhalkin in \cite{Mir}) 
counting pairs $(C,C_+)$, where
\begin{enumerate}
\item[(a)] $C\subset\Tor(P)$,
$C\in|2D_P|$, is a real nodal ({\it i.e.}, the only possible singularities are ordinary nodes) curve 
of genus $g\in\{0,1,2\}$ having one real branch in ${\mathbf Q}(+,+)$ and $g$ more real branches that lie in
distinct non-positive closed quadrants; notice that such a real curve $C$ is necessarily separating (it is even {\it maximal},
that is, it has the maximal possible number of real branches for a real curve of genus $g$); 
\item[(b)] $C$ satisfies the following relative constraints: $C\cap\Tor(\partial P)$
is contained in the boundary $\partial{\mathbf Q}(+,+)$ of the positive quadrant $Q(+, +)$,
and each point 
$w\in C\cap\Tor(\partial P)$ is a smooth point of $C$ and $\Tor(\partial P)$, where they intersect with an even multiplicity $2k_w$.
\end{enumerate}  
More precisely, each invariant counts the pairs $(C, C_+)$ satisfying the following conditions
(see \cite{ISalg} for details):   
\begin{itemize}
\item the curve $C$ has given degree $2D_P$ and given genus $g \in \{0, 1, 2\}$; 
\item the half $C_+$ has given quantum index $\QI(C_+)$;
\item the curve $C$ is tangent, with given even intersection multiplicities $2k_w$, 
to the toric divisors at given smooth points $w$ of $\partial{\mathbf Q}(+, +)$, 
the points of tangencies being smooth for $C$ as well; 
the collection of fixed points of $\partial{\mathbf Q}(+, +)$ and the intersection multiplicities are chosen in such a way 
that, 
\begin{enumerate}
\item[(i)]
for each toric divisor $\Tor(\sigma)$, $\sigma\in P^1$, the sum of intersection multiplicities of $C$ and $\Tor(\sigma)$ 
at those of these points that belong to $\Tor(\sigma)$ is equal to the total intersection multiplicity 
$C \cdot \Tor(\sigma)$ 
of $C$ and $\Tor(\sigma)$;  
\item[(ii)] the Menelaus condition (see \cite{Mir}) is verified; 
\end{enumerate} 
\item the curve $C$ passes through given $g$ points in distinct non-positive open quadrants. 
\end{itemize}
Notice that these conditions imply that the pair $(C, C_+)$ is as in (a) and (b) above. 
The weights of the counted curves are $\pm1$: for one series of invariants, they are $W'(C,C_+)=(-1)^{f_1(C,C_+)}$, and for the other, $W''(C,C_+)=(-1)^{f_2(C,C_+)}$, where
\begin{align}f_1(C,C_+) = & \; e_+(C)+h_-(C)+\#\{w\in C\cap\Tor(\partial P)\ :\ k_w\ \text{even},\ \eps(C_+,w)=-1\}, \nonumber\\
f_2(C,C_+) = & \; e(C)+\#\{w\in C\cap\Tor(\partial P)\ :\ \eps(C_+,w)=-1\},\nonumber
\end{align}
with $e_+(C)$, respectively, $h_-(C)$, standing for the number of {\it elliptic nodes}
({\it i.e.}, real
double points that are transversal intersections of two complex conjugated local branches)
of $C$ in ${\mathbf Q}(+,+)$,
respectively, {\it hyperbolic nodes}
({\it i.e.}, real
double points that are transversal intersections of two real local branches)
of $C$
in $\Tor_\R(P) \setminus Q(+, +)$,
and $e(C)$ is the total number of elliptic nodes of $C$;
at last, $\eps(C_+,w)$ equals $1$ or $-1$ according as the orientation of $\partial{\mathbf Q}(+,+)$ is coherent or not with the complex orientation of $C_\R$ at the point $w$.
We 
show that these signs coincide up to a factor depending only on $g$, $D_P$,
and $\QI(C_+)$.

\begin{theorem}\label{th-signs} Under the above assumptions (a) and (b), one has 
\begin{equation}W''(C,C_+)=W'(C,C_+)(-1)^{
g+ p_a(D_P) + p_a(2D_P) + \frac{2D_P^2-\QI(C_+)}{4}},\label{eq-signs}\end{equation}
where $p_a(D)=\frac{D^2+DK}{2}+1$
is the arithmetic genus of any representative of a divisor class $D$,
and $K$ is the canonical class of $\Tor(P)$.
\end{theorem}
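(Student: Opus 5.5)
The plan is to reduce \eqref{eq-signs} to a parity identity and prove that identity through a topological formula for $\QI(C_+)$. Let $e_-(C)=e(C)-e_+(C)$ be the number of elliptic nodes outside ${\mathbf Q}(+,+)$. Subtracting the exponents, \eqref{eq-signs} is equivalent to
$$e_-(C)+h_-(C)+\#\{w\ :\ k_w\ \text{odd},\ \eps(C_+,w)=-1\}\equiv g+p_a(D_P)+p_a(2D_P)+\tfrac{2D_P^2-\QI(C_+)}{4}\mod 2 .$$
So everything comes down to expressing $\QI(C_+)$ modulo $8$, or $\frac14\QI(C_+)$ modulo $2$, through nodes and tangency data.

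\textbf{Step 1: rewrite $\QI(C_+)$ as an intersection number.} On $(\C^*)^2$ the form $\frac{dx\wedge dy}{xy}$, pulled back to the holomorphic curve, agrees up to sign with $d\arg u\wedge d\arg v$. Hence $\pi^2\QI(C_+)$ is the signed area of $\Arg(C_+)\subset(\R/2\pi\Z)^2$, i.e. $\QI(C_+)$ is $4$ times the degree of $\Arg$ on a suitable closed chain.

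The boundary of $C_+$ is $C_\R$, which $\Arg$ sends into the four points $\{0,\pi\}^2$, plus contributions near the toric divisors. To close it up, I will build an \emph{Arnold--Rokhlin surface}. Glue to $C_+$ the pieces of $\Tor_\R(P)$ bounded by the real branches (the "membranes" in the quadrants), with canonical orientations. The branch in ${\mathbf Q}(+,+)$ together with $\partial{\mathbf Q}(+,+)$ bounds a disc. Each of the $g$ ovals in the non-positive quadrants bounds a disc in its quadrant.

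Then perturb the result into a closed oriented surface $S$ in $\Tor(P)$, or in a toric blow-up. At the tangency points $w$ with $\Tor(\partial P)$, the local model $v=u^{2k_w}$ fixes how $C_+$ is capped along the divisor. The sign $\eps(C_+,w)$ records whether the cap is consistent with the boundary orientation of ${\mathbf Q}(+,+)$.

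\textbf{Step 2: compute $S\cdot S$ in two ways.} Globally, $[S]$ is determined by $D_P$. Since $C_\R$ meets the toric divisors only on $\partial{\mathbf Q}(+,+)$ and $C\in|2D_P|$, I expect $[S]$ to be $D_P$ plus a correction from the real quadrants. Its square is then a combination of $D_P^2$ and $D_P K$, which adjunction converts into $p_a(D_P)$ and $p_a(2D_P)$.

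Locally, $S\cdot S$ is the Euler number of the normal bundle, computed Arnold/Rokhlin-style:
\begin{itemize}
\item the part along $C_+$ gives $\frac12 C^2$ minus the node contributions;
\item each elliptic node outside ${\mathbf Q}(+,+)$ and each hyperbolic node outside ${\mathbf Q}(+,+)$ contributes $\pm1$, since there $C_+$ or the membrane meets itself transversally, while nodes in ${\mathbf Q}(+,+)$ contribute evenly;
\item the membranes contribute through their Euler characteristics, giving $1+g$ discs, hence the $g$;
\item each tangency point contributes a term depending on $k_w$ and $\eps(C_+,w)$.
\end{itemize}
Comparing with Step 1, in which $\QI(C_+)/4$ appears as the $\Arg$-degree, i.e. the algebraic intersection of $S$ with a real torus fibre, should yield the parity identity above.

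\textbf{Main obstacle.} The hardest part is the local analysis at the tangency points $w$. One has to show that, modulo $2$, the contribution of $w$ is $1$ exactly when $k_w$ is odd and $\eps(C_+,w)=-1$, up to a constant that is summed over all $w$ and absorbed by condition (i) into $D_P$-terms. I would first do this in the model $v=u^{2k}$ near $v=0$, compute the winding of the argument of $v$ along the half-disc boundary, and track the two possible orientations. A secondary difficulty is checking that the normalization constant $\frac{2D_P^2}{4}$ is right, by testing on a simple example, e.g. $g=0$ with a conic-type curve in $\PP^2$.
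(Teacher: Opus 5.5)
Your reduction of (\ref{eq-signs}) to the parity congruence is correct and is the paper's starting point. Step~2, however, cannot deliver that congruence, because the self-intersection of an Arnold--Rokhlin surface does not see $\QI(C_+)$.

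Take $S$ to be a closed cycle made of $(C'_{sm})_+$ and discs in the open quadrants, and let $S'$ be the analogous surface for $C_-$. Then $[S]+[S']=[C]$. Complex conjugation maps $S$ to $-S'$ and acts by $-1$ on $H_2(\Tor(P);\Q)$. Hence $[S]=D_P$ and $S\cdot S=D_P^2$. The local evaluation of the normal Euler number then gives a Rokhlin-type complex orientation relation among ovals, nested pairs and non-real nodes on $C_+$, and $\QI(C_+)$ does not occur in it.

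For the same reason, your claimed node contributions ($\pm1$ outside ${\mathbf Q}(+,+)$, even inside) have no source. A self-intersection computation treats the four quadrants alike. The asymmetry between ${\mathbf Q}(+,+)$ and the other quadrants in the target congruence must come from a cycle that singles out ${\mathbf Q}(+,+)$.

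So all the content sits in your phrase ``comparing with Step~1'', which you do not carry out. There are also two problems with Step~1 itself:
\begin{itemize}
\item The fibres of $\Arg$ are rotated quadrants, not tori. A torus $\{|u|=a,|v|=b\}$ is null-homologous in $\Tor(P)$, so a closed $S$ meets it with intersection number $0$.
\item An $\Arg$-fibre has boundary on $\Tor(\partial P)$, so its intersection with $S$ is not defined until the fibre is capped off.
\end{itemize}

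The missing ingredient is an explicit formula for $\QI(C_+)$ modulo $8$. The paper obtains it as follows.
\begin{itemize}
\item It starts from Mikhalkin's formula $\QI(C_+)=-EN(C_+)-\Rot(C_\R)+\sum_w k_w\eps(C_+,w)$.
\item It reinterprets this as $\QI(C_+)=-{\mathcal C}\circ\Arn_{sm}(C_+)$, where ${\mathcal C}$ is the four quadrants capped by the hemispheres $\Tor(\sigma)_\pm$.
\item It uses $[{\mathcal C}-2{\mathcal C}_+]=0$, with ${\mathcal C}_+=2{\mathbf Q}(+,+)+\sum_\sigma(\Tor(\sigma)_--\Tor(\sigma)_+)$, to localize at the positive quadrant:
\[
\QI(C_+)=-4EN_+(C_+)-4\Rot_+(C_\R)+2\sum_w k_w\eps(C_+,w).
\]
\end{itemize}
This localization produces both the factor $4$ and the special role of ${\mathbf Q}(+,+)$.

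After that, the proof is a short parity count using four facts:
\begin{itemize}
\item $EN_+\equiv e_+$.
\item $\Rot_+\equiv h_++1$: smoothing the $h_+$ self-crossings of the single branch in ${\mathbf Q}(+,+)$ along the complex orientation preserves the rotation number and changes the number of components by $\pm1$.
\item $\sum_wk_w\eps(C_+,w)=-D_PK-2\sum_{\eps(C_+,w)=-1}k_w$.
\item $e+h\equiv p_a(2D_P)-g$, since non-real nodes come in conjugate pairs.
\end{itemize}

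If you want to keep your Arnold--Rokhlin surface, intersect it with ${\mathcal C}_+$ rather than with itself. The tangency analysis you flagged as the main obstacle then becomes easy: each $w$ contributes $k_w$ points of multiplicity $-\eps(C_+,w)$.
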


Before proving the theorem, we provide some information on the quantum index.
In particular, we show that the expression $\frac{2D_P^2-\QI(C_+)}{4}$
appearing in the righthand side of the formula (\ref{eq-signs})
is an integer (see Corollary \ref{cor-congruence}).

\subsection{Computation of quantum index}\label{sec-cqi}

The key role in the proof of Theorem \ref{th-signs} is played by Mikhalkin's formula for quantum index \cite[Theorem 3.4 and the next paragraph]{Mir}, which we present here in the form
\begin{equation}\QI(C_+)=-EN(C_+)-\Rot(C_\R)+\sum_{w\in C\cap\Tor(\partial P)}k_w\eps(C_+,w),\label{eq-mikh2}\end{equation}
where
\begin{itemize}\item $EN(C_+)$ is the intersection number of $C_+$ with $(\R^\times)^2$ equipped with the canonical orientation;
\item $\Rot(C_\R)$ is the sum of rotation numbers of real branches of $C$ equipped with the complex orientation and considered to be immersed into the corresponding (oriented) quadrant.
\end{itemize}

\begin{remark}\label{rem-signs}
In Mikhalkin's formula, there is the summand $-\frac{1}{2}\Rot_{\log}(C_\R)$, and the reason for an extra factor $\frac{1}{2}$ is that the logarithmic Gauss map target is $\PP^1_\R$, while in $\Rot(C_\R)$ we use the map to $S^1$.
\end{remark}



The canonical orientations of the non-positive quadrants induce uniquely defined orientation of non-positive segments of real toric divisors $\Tor_\R(\sigma)$, $\sigma\in P^1$. The sum of these oriented segments is an integral $1$-cycle $N$. We consider two $2$-cycles relative to $N$:
$${\mathcal C}_+=2{\mathbf Q}(+,+)+\sum_{\sigma\in P^1}(\Tor(\sigma)_--\Tor(\sigma)_+),$$
$${\mathcal C}=\sum_{\eps_1,\eps_2=\pm1}{\mathbf Q}(\eps_1,\eps_2)+\sum_{\sigma\in P^1}(\Tor(\sigma)_--\Tor(\sigma)_+),$$
where the hemisphere $\Tor(\sigma)_+$ induces the canonical orientation of the positive segment of $\Tor_\R(\sigma)$ for each $\sigma\in P^1$.

\begin{lemma}\label{lem-cyc}
The relative cycles ${\mathcal C}_+$ and ${\mathcal C}$ satisfy the following relations:
$$\partial{\mathcal C}_+=2N,\quad\partial{\mathcal C}=4N,$$
$$[{\mathcal C}-2{\mathcal C}_+]=0\in H_2(\Tor(P)).$$
\end{lemma}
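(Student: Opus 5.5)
The plan is to check the boundary relations by a local computation along each real toric divisor, and then to get the homology relation from the long exact sequence of the pair $(\Tor(P),\Tor_\R(P))$, or more directly by recognizing ${\mathcal C}-2{\mathcal C}_+$ as a sum of closed pieces that each bound.

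\emph{Boundaries.} Fix a side $\sigma\in P^1$. The real toric divisor $\Tor_\R(\sigma)\simeq\R P^1$ splits into a positive segment and a non-positive segment, separated by the two vertices of $\sigma$. The positive segment is the part of $\partial{\mathbf Q}(+,+)$ lying on $\Tor(\sigma)$.

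Consider first ${\mathcal C}_+$. The boundary of $2{\mathbf Q}(+,+)$ is twice the positive segments, taken with the orientation induced from $dx\wedge dy$. We need to check that this induced orientation is opposite to the canonical orientation of the positive segment, which is the one induced by $\Tor(\sigma)_+$. This should follow from the convention that the canonical orientation comes from the non-positive quadrants via reflections. Granting this, $\partial(\Tor(\sigma)_--\Tor(\sigma)_+)$ equals $\partial\Tor(\sigma)_-$ minus $\partial\Tor(\sigma)_+$. These two hemispheres induce opposite orientations on the common circle $\Tor_\R(\sigma)$, so the difference contributes twice the circle $\Tor_\R(\sigma)$ with the orientation induced by $\Tor(\sigma)_-$. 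On the positive segment this contribution cancels the term coming from $2{\mathbf Q}(+,+)$. On the non-positive segment it gives $2N$, provided the $\Tor(\sigma)_-$-orientation of the non-positive part agrees with the canonical orientation coming from the non-positive quadrants.

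Now consider ${\mathcal C}$. Adjacent quadrants carry reflected orientations, so they induce the \emph{same} orientation on a shared boundary segment; hence the interior axes are counted twice rather than cancelling. These axes are exactly the non-positive segments of $\Tor_\R(\sigma)$ when $\sigma$ is interior to the relevant face, and the segments adjacent to ${\mathbf Q}(+,+)$. The two quadrants adjacent along each segment contribute $\pm2$ times it, and the hemisphere term adds $2$ times the circle as above. Adding up gives $4N$ on the non-positive segments and $0$ on the positive ones.

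This sign bookkeeping is the main obstacle. I would handle it in local coordinates near a single side $\sigma$, taking $x$ to be the coordinate transverse to $\Tor(\sigma)$ and $y$ the coordinate along it, and tracking orientations in a chart $\C\times\C^*$.

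\emph{Homology.} Given the boundary formulas, $\partial({\mathcal C}-2{\mathcal C}_+)=0$, so ${\mathcal C}-2{\mathcal C}_+$ is an absolute cycle. Explicitly,
$${\mathcal C}-2{\mathcal C}_+=\sum_{\eps_1,\eps_2}{\mathbf Q}(\eps_1,\eps_2)-4{\mathbf Q}(+,+)-\sum_{\sigma}(\Tor(\sigma)_--\Tor(\sigma)_+).$$
To show this class vanishes, I would pair it with a basis of $H_2(\Tor(P))\cong H^2$, namely the divisors $\Tor(\sigma)$ (which generate after resolving), and check that every intersection number is zero. Alternatively, one can observe that it equals $\Tor_\R(P)$ with a suitable orientation plus a multiple of $\sum_\sigma[\Tor(\sigma)]$ coming from the hemisphere terms, and then use that a real surface's real part $\R P^2$-type components contribute $2$-torsion-free classes consistent with the complex conjugation.

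The concrete route I would try first is a deformation argument. The complex conjugation $\conj$ acts on $H_2$ by $-1$ on divisor classes of real curves. The cycle $\Tor(\sigma)_--\Tor(\sigma)_+$ equals $\Tor(\sigma)_- + \conj(\Tor(\sigma)_-)$ with the same orientation as $\conj$-image up to sign, hence it is $\conj$-anti-invariant, and its total lies in $\ker(1+\conj)$. Combining this with the fact that the quadrant terms are fixed by $\conj$, whose action reverses orientation on $\Tor(P)$ restricted to $H_2$ relative classes, and using that the intersections with each $\Tor(\sigma)$ vanish, should give zero. The intersection pairing check with the $\Tor(\sigma)$, after a small perturbation of the real parts off the divisors using the $(\R_{>0})^2$-action, is the step I would actually compute.
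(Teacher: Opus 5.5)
Your proposal has a genuine gap in the homology part, which is the only part of the lemma that is not formal. Pairing $[\mathcal{C}-2\mathcal{C}_+]$ with the toric divisor classes is the right strategy, and it is the paper's. But you never evaluate the pairings, and the mechanism you suggest cannot do it. The $(\R_{>0})^2$-action preserves every $\Tor(\sigma)$ and every quadrant. Part of $\mathcal{C}-2\mathcal{C}_+$ lies inside $\Tor(\sigma)$: the hemispheres $\Tor(\sigma)_\pm$, and the edges of the quadrants. So no motion preserving $\Tor(\sigma)$ can put the chain in general position with respect to $\Tor(\sigma)$. (Your other alternative also fails as stated: $\Tor(\sigma)_--\Tor(\sigma)_+$ is not a multiple of $[\Tor(\sigma)]$, not even a cycle, and $\Tor_\R(P)$ is in general non-orientable.)

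The missing idea is to test against different representatives. The paper uses smooth real curves with \emph{empty real part} in $|2D_P|$ and in $|2nD_P-2\Tor(\sigma)|$, $n\gg0$. Such a curve misses all quadrants. Its intersection points with each $\Tor(\tau)$ are non-real, so they come in conjugate pairs with one point in each of $\Tor(\tau)_\pm$, both with the same local sign. These cancel in $\Tor(\tau)_--\Tor(\tau)_+$. This kills the pairings with $[2D_P]$ and $[2nD_P-2\Tor(\sigma)]$, hence with every $[\Tor(\sigma)]$, and these classes span $H_2(\Tor(P);\Q)$.

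Your conjugation idea would give a different and shorter proof, but you have the sign backwards, and you then close the argument circularly by invoking the vanishing of the intersections with $\Tor(\sigma)$, which is what has to be shown.
\begin{itemize}
\item $\conj$ maps $\Tor(\sigma)_-$ onto $\Tor(\sigma)_+$ reversing the complex orientation. So $\Tor(\sigma)_--\Tor(\sigma)_+=\Tor(\sigma)_-+\conj_*\Tor(\sigma)_-$ is $\conj$-\emph{invariant}, not anti-invariant.
\item The quadrants are pointwise fixed, so the whole chain $\mathcal{C}-2\mathcal{C}_+$ is $\conj$-invariant.
\item On the other hand, $\conj_*=-1$ on $H_2(\Tor(P);\Q)$, because this space is spanned by the classes $[\Tor(\sigma)]$ of real curves, whose complex orientations $\conj$ reverses.
\end{itemize}
Hence $2[\mathcal{C}-2\mathcal{C}_+]=0$, and the class vanishes rationally, which is also the level at which the paper's argument works. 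No intersection computation is needed. This is the same phenomenon the paper exploits, since $x\circ D=\conj_*x\circ\conj_*D=-x\circ D$.

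There is also a smaller sign slip in your boundary computation. On the positive segment, $\partial\mathbf{Q}(+,+)$ must induce the \emph{same} orientation as $\Tor(\sigma)_+$. This is consistent with your own later, correct remark that adjacent quadrants induce the same orientation on a common segment. With the opposite orientation you assert, $2\partial\mathbf{Q}(+,+)$ and $\partial(\Tor(\sigma)_--\Tor(\sigma)_+)$ would add up to four times the positive segment instead of cancelling.
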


{\bf Proof.}
Only the vanishing relation requires an explanation.
The vector space $H_2(\Tor(P);\Q)$ is generated by the classes of toric divisors, and we
show that the intersections of
$[{\mathcal C}-2{\mathcal C}_+]$
with all these classes vanish.

Observe that $[{\mathcal C}-2{\mathcal C}_+]\circ[2D_P]=0$. Indeed, the linear system $|2D_P|$ contains a smooth real curve $C$ with $C_\R=\emptyset$. Thus,
$$({\mathcal C}-2{\mathcal C}_+)\circ C=(\sum_{\sigma \in P^1}(\Tor(\sigma)_--\Tor(\sigma)_+)) \circ C = 0,$$ 
since the latter intersection consists of pairs of complex conjugate points with opposite intersection multiplicities.
Similarly, for any toric divisor $\Tor(\sigma)$ and a sufficiently large $n$,
the linear system $|2nD_P-2\cdot\Tor(\sigma)|$ contains a smooth real curve with an empty real part,
and hence
$$[{\mathcal C}-2{\mathcal C}_+]\circ[2nD_P-2\cdot\Tor(\sigma)] = 0.$$
Hence, we obtain $[{\mathcal C}-2{\mathcal C}_+]\circ[\Tor(\sigma)]=0$.
\proofend

\begin{lemma}\label{lem-sm}
Let
$(C,C_+)$
be an oriented real curve such that its singularities are ordinary nodes, its intersection points with $\Tor(\partial P)$ are smooth points of $C$ and $\Tor(\partial P)$, and all intersection multiplicities at the points of $C\cap\Tor(\partial P)$ are even. Then,
the following holds.

(1) There exists a small conjugation-invariant deformation of $C$ inside the linear system $|2D_P|$
such that
all intersection points with toric divisors are fixed {\rm (}intersection multiplicities at them being preserved{\rm )}, and 
all real nodes of $C$ are smoothed out: hyperbolic ones in accordance with the complex orientation of $C_\R$,
elliptic ones with appearance of small \emph{ovals}
{\rm (}{\it i.e.}, embedded circles bounding disks in the corresponding quadrants,
{\it cf.} \cite{Fied}{\rm )}.
The resulting deformed curve $C_{sm}$ is separating
as well, one of its halves $(C_{sm})_+$ is a small deformation of $C_+$, and
the real point set $(C_{sm})_\R$ of $C_{sm}$ is the union of disjoint ovals.

(2) Furthermore, there exists a small conjugation-invariant deformation of $C_{sm}$
inside the linear system $|2D_P|$ that preserves
the number of nodes
and slightly moves
all local real branches of $C_{sm}$ centered at the points $w\in (C_{sm})_\R\cap\Tor(\partial P)$
inside the corresponding quadrant
in such a way that the resulting deformed curve $C'_{sm}$ does not have real intersections
with $\Tor(\partial P)$.
\end{lemma}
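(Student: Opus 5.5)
The plan is to construct both deformations as solutions of a deformation problem with prescribed local behavior, and to show the problem is unobstructed via a standard $H^1$-vanishing argument for the normal sheaf twisted by the conditions. Work on the normalization $\nu:\widetilde C\to\Tor(P)$ and consider the sheaf $\cN=\nu^*\cO(C)/\,\text{(tangent)}$, i.e. the normal sheaf of the map, twisted by the divisor of boundary conditions. Keeping the intersection points $w$ fixed with multiplicity $2k_w$ imposes $2k_w$ conditions at each $w$. Since $\sum_w 2k_w=2D_P\cdot(-K)=-CK$ by condition (i), the resulting degree is
\[
\deg \cN(-\textstyle\sum 2k_w w)\;=\;-CK-2+2g-\textstyle\sum 2k_w\;=\;2g-2 .
\]
This is borderline, so I would instead argue in the linear system $|2D_P|$ directly, which is the approach I would actually follow. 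Curves in $|2D_P|$ with prescribed tangencies at the fixed points $w$ form a linear subsystem $\Lambda$. By adjunction and the fact that $\Tor(P)$ is toric with $-K$ effective, the conditions imposed by the nodes we want to smooth, together with the boundary conditions, are independent. Concretely, I would use that $h^1(\cO_{\Tor(P)})=0$ and that the nodes of $C$ impose independent conditions on $|C+K-\text{boundary}|$-type adjoint systems; equivalently, one applies the standard independence result for nodes of a nodal curve $C$ whose degree satisfies $-CK>0$ relative to the tangency conditions (Shustin-type $h^1$ vanishing, using $-K\cdot C'>0$ on every component).

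For part (1), independence yields, for each real node $z$, a real curve $C_z\in\Lambda$ passing through all other nodes but not through $z$. The real family $C+\sum t_zC_z$ then smooths exactly the real nodes, with the local equation at $z$ being $xy=t$ (hyperbolic) or $x^2+y^2=t$ (elliptic). Choosing the sign of each $t_z$ appropriately gives the desired smoothings:
\begin{itemize}
\item at a hyperbolic node, the sign compatible with the complex orientation; this exists because the two branches carry orientations from $C_+$, and exactly one of the two smoothings respects them;
\item at an elliptic node, the sign producing a small oval.
\end{itemize}
The non-real nodes are smoothed in conjugate pairs, or kept; this choice does not matter for the statement.

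It then remains to verify the following for $C_{sm}$.
\begin{itemize}
\item $C_{sm}$ is separating, and $(C_{sm})_+$ deforms $C_+$. Orientation-compatible smoothing of hyperbolic nodes glues $C_+$ coherently. At an elliptic node, $C_+$ contains one of the two conjugate branches, and the new vanishing cycle bounds a disk glued into the half.
\item $(C_{sm})_\R$ is embedded, so it is a union of disjoint circles. Here "ovals" means embedded circles; the ones created at elliptic nodes bound disks. I should verify whether the statement intends all components to be null-homotopic in their quadrant closures. Since the quadrants are disks, every embedded circle bounds a disk, so this holds automatically.
\end{itemize}

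For part (2), at each real boundary point $w$ the local real branch is tangent to $\Tor_\R(\sigma)$ with even order $2k_w$, lying on one side. I would release the conditions at $w$ from "order $2k_w$ at $w$" to "the restriction to $\Tor(\sigma)$ equals $(s-s_w)^{2k_w}+\eta$" with small real $\eta$ of the sign pushing the branch into the quadrant, so that the restriction has no real roots. Independence of the boundary conditions from the node conditions, which is the same $h^1$ vanishing as before, shows that such a perturbation preserving all nodes exists.

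The main obstacle is the independence/unobstructedness claim. The degree count above is exactly borderline, so I would first try the positivity $-K\cdot C>\sum 2k_w$ failing by one and compensate: we only need to move finitely many real parameters, one per real node and one per $w$, rather than all conditions at once. Alternatively, I would use the evenness of $2D_P$ and the fact that the boundary conditions are effectively on a curve class $D_P$ "doubled", giving extra room.
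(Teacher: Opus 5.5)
Your local analysis (the choice of signs at hyperbolic and elliptic nodes, separatedness after smoothing the real nodes, and real components bounding disks in closed quadrants) is fine. However, the step you yourself call the main obstacle is left open, and the reasons you give for it are incorrect.

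\textbf{The independence step.} Your degree count is sharp. Since $\Tor(\partial P)\in|-K|$, one has $\mathcal{N}_\nu(-\nu^*\Tor(\partial P))\cong K_{\widetilde C}$, so $h^1=1$ and the ``Shustin-type'' vanishing you invoke does not hold here. Nor are the node conditions and boundary conditions jointly independent on $|2D_P|$: the boundary conditions alone impose only $-CK-1$ conditions, because of the Menelaus relation. The adjoint system you mention is also the wrong one: $|C+K-\text{boundary}|=|2D_P+2K|$. The suggestions in your last paragraph (``compensate'', ``evenness gives extra room'') do not address this.

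The missing point is that this one-dimensional $H^1$ is exactly the Menelaus defect, so the nodes are independent on $\Lambda$ itself.
\begin{itemize}
\item Let $\varphi_\partial$ be the equation of $\Tor(\partial P)$ and $f_C$ that of $C$. Then $\Lambda=\{\lambda f_C+\varphi_\partial h:\ h\in H^0(\cO(2D_P+K))\}$, so $\dim\Lambda=p_a(2D_P)$ and $T_C\Lambda\cong H^0(\cO(2D_P+K))$.
\item The differential of the map from $\Lambda$ to the product of the one-dimensional versal bases of the nodes is $h\mapsto(\varphi_\partial(z)h(z))_z$, with $\varphi_\partial(z)\ne0$.
\item The $\delta=p_a(2D_P)-g$ nodes of the irreducible curve $C$ impose independent conditions on $|2D_P+K|$. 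Indeed, by adjunction and $H^0(K)=H^1(K)=0$, the sections of $\cO(2D_P+K)$ vanishing at the nodes correspond to $H^0(K_{\widetilde C})$, which has dimension $g$.
\end{itemize}
Hence that map is a real submersion at $C$.

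\textbf{Non-real nodes in part (1).} Your linear family $C+\sum t_zC_z$ also smooths the non-real nodes, at order $t^2$. Contrary to your remark, this matters. If a non-real node joins $\widetilde C_+$ to $\widetilde C_-$, smoothing it together with its conjugate connects the two halves, and $C_{sm}$ is no longer separating. You must use the submersion above and the implicit function theorem to prescribe the value $0$ at the non-real nodes and small real values of the chosen signs at the real ones.

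\textbf{Part (2).} Prescribing $(s-s_w)^{2k_w}+\eta_w$ at every $w$ fixes the entire boundary divisor, which must satisfy the Menelaus relation. With the signs of the $\eta_w$ forced, this can be impossible: in side coordinates where Menelaus reads $\prod(\text{roots})=\pm1$, every such perturbation increases the local product of roots. Also, ``the same $h^1$ vanishing'' is again the one that fails.

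What works instead is the following.
\begin{itemize}
\item Consider the family of curves keeping the non-real nodes of $C_{sm}$ (these are independent in $|2D_P|$ since $-CK>0$).
\item The differential of the restriction map to boundary divisors has one-dimensional cokernel $H^1(K_{\widetilde{C_{sm}}})$. Its image lies in, and hence equals, the tangent space of the Menelaus hypersurface, so the map is a submersion onto that hypersurface.
\item On that hypersurface, choose a real divisor with only non-real points near each $w$, e.g.\ $(s-s_w+a_w)^{2k_w}+\eta_w$ with $\eta_w>0$. The shift $a_w$ lets the local product of roots be larger or smaller than before, so the relation can be met.
\item Continuity then forces each local real branch into the quadrant.
\end{itemize}
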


{\bf Proof}.
The both statements are consequences of standard arguments of deformation theory ({\emph{cf}.,
for example, \cite[Proposition 4.4.3(a)]{GLS1}
and \cite[Theorem 1]{Sh1}).
\proofend

The following definition is largely inspired by the original constructions due to V. I. Arnold \cite{Arn} and V. A. Rokhlin \cite{Roh}.

\begin{definition}\label{def-arn}
An Arnold-Rokhlin surface $\Arn_{sm}(C_+)$
of $C_+$ is the integral $2$-cycle
combined of $(C'_{sm})_+$
and
the disks bounded by the ovals of $(C'_{sm})_\R$ in the corresponding quadrants;
for each disk, the orientation is chosen in such a way that it
induces on
the boundary of the disk the orientation opposite to the complex one.
\end{definition}

An oval of $C'_{sm}$ in a quadrant ${\mathbf Q}$ is
said to be \emph{negative} (respectively, \emph{positive})
if its complex orientation is (respectively, is not)
induced by the canonical orientation
of the disk surrounded by this oval in ${\mathbf Q}$, the canonical orientation
of the disk being coming from the orientation of $\mathbf Q$.
Denote by $\ell_+(C_+)$ and $\ell_-(C_+)$
(respectively, $\ell_+(C_+)\big|_{{\mathbf Q}(+,+)}$ and $\ell_-(C_+)\big|_{{\mathbf Q}(+,+)}$)
the numbers of positive and negative ovals
(respectively, positive and negative ovals in ${\mathbf Q}(+,+)$)
of $(C'_{sm})_+$.

\begin{proposition}\label{lem-int}
The quantum index $\QI(C_+)$ of $C_+$ satisfies the following relations:
\begin{align}\QI(C_+) & =
4\ell_+(C_+)\big|_{{\mathbf Q}(+,+)}-4\ell_-(C_+)\big|_{{\mathbf Q}(+,+)}
+ 2\sum_{w\in C\cap\Tor(\partial P)}k_w\eps(C_+,w), \nonumber \\
& = -2{\mathcal C}_+\circ\Arn_{sm}(C_+), \label{eq-qi1} \\
& \nonumber \\
\QI(C_+) & = \ell_+(C_+)-\ell_-(C_+)+\sum_{w\in C\cap\Tor(\partial P)}k_w\eps(C_+,w) \nonumber \\
& = -{\mathcal C}\circ\Arn_{sm}(C_+). \label{eq-qi2}
\end{align}
\end{proposition}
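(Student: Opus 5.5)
We prove the two lines of each formula by combining Mikhalkin's formula (\ref{eq-mikh2}) with invariance of $\QI$ under the deformations of Lemma \ref{lem-sm}, then reading off intersection numbers with ${\mathcal C}_+$ and ${\mathcal C}$.

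\emph{Reduction to $C'_{sm}$.} The integral defining $\QI$ depends continuously on $C_+$ and takes values in $\frac12\Z$. The deformations $C\to C_{sm}\to C'_{sm}$ of Lemma \ref{lem-sm} are small, and the fixed tangency points, which carry the nonzero residues of $\frac{dx\wedge dy}{xy}$, are handled by the condition that they are real. So I would first argue that $\QI(C_+)=\QI((C_{sm})_+)$. For $C'_{sm}$, which has no real boundary points, the boundary data are encoded differently, so the real work is on $C_{sm}$.

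\emph{Applying (\ref{eq-mikh2}) to $C_{sm}$.} The real part of $C_{sm}$ is a disjoint union of embedded ovals, so $EN((C_{sm})_+)=0$. The rotation number of an embedded oval in an oriented quadrant is $\pm1$: it is $+1$ if the complex orientation agrees with the boundary orientation of the disk, that is, for a negative oval, and $-1$ for a positive one. Ovals touching $\partial{\mathbf Q}$ at the points $w$ are still embedded circles bounding disks. Hence $-\Rot=\ell_+-\ell_-$, counted before the push-off. This gives the first line of (\ref{eq-qi2}), once I check that the push-off to $C'_{sm}$ does not change oval types. That check holds because the push moves branches into the quadrant.

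\emph{Intersections with the cycles.} I compute $-{\mathcal C}\circ\Arn_{sm}(C_+)$ as follows.
\begin{itemize}
\item Each disk in a quadrant ${\mathbf Q}$ is a surface lying inside the totally real part. Its self-intersection-type contribution with ${\mathbf Q}$ is given by the Euler number of the normal bundle, which equals minus the tangent Euler characteristic, corrected by orientation sign. Perturbing the disk along $J$ times the tangent field, I expect $\pm1$ according to whether the oval is positive or negative.
\item The piece $(C'_{sm})_+$ meets $\Tor(\sigma)_\pm$ at the nonreal points near $w$ after the push-off. Each $w$ of multiplicity $2k_w$ produces $k_w$ intersections with the hemisphere $\Tor(\sigma)_{\mp}$, chosen according to $\eps(C_+,w)$ and to the side of the push. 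This yields the term $\sum k_w\eps(C_+,w)$.
\item The interior of $(C'_{sm})_+$ avoids the real quadrants.
\end{itemize}
For ${\mathcal C}_+$ only ${\mathbf Q}(+,+)$ counts, with coefficient $2$. This gives the first line of (\ref{eq-qi1}), except that the coefficients come out as half of those stated. The missing factor of $2$ is supplied by the relation $[{\mathcal C}-2{\mathcal C}_+]=0$ of Lemma \ref{lem-cyc}: since $\Arn_{sm}$ has boundary on $N$, that relation gives $-2{\mathcal C}_+\circ\Arn=-{\mathcal C}\circ\Arn$. The equality $\QI=-2{\mathcal C}_+\circ\Arn$ then follows, and expanding it gives the explicit first line.

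\emph{Main obstacle.} The hardest step is the sign bookkeeping for the disk contributions and the boundary hemisphere intersections. One also has to verify that the intersection numbers are well defined for relative cycles: $\partial\Arn_{sm}=0$, while $\partial{\mathcal C}=4N$ must avoid $\Arn$, which holds since $C'_{sm}$ misses the real toric divisors. I would fix the signs by testing a local model, such as a small conic-like oval in ${\mathbf Q}(+,+)$ and a single tangency with $k_w=1$, and comparing against (\ref{eq-mikh2}).
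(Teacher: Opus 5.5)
Your proposal is correct, but it reaches the key equality by a different route from the paper. The intersection-theoretic half (the second equalities in (\ref{eq-qi1}) and (\ref{eq-qi2})) follows the paper's argument: shift each disk along $\bi\xi\overline v_O$ to get a Rokhlin-type contribution $\pm1$ at the zero of the field, count the $k_w$ points of $(C'_{sm})_+$ on $\Tor(\sigma)_\pm$ near each $w$, and pass between ${\mathcal C}$ and $2{\mathcal C}_+$ by Lemma \ref{lem-cyc}.

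The genuine difference is how you prove $\QI(C_+)=\ell_+(C_+)-\ell_-(C_+)+\sum k_w\eps(C_+,w)$. The paper applies (\ref{eq-mikh2}) to the nodal curve $C$ itself and converts node data into oval data by hand. Smoothing hyperbolic nodes along the complex orientation does not change $\Rot$. For elliptic nodes, the model $x^2+y^2=\xi^2$ (the determinant computation) shows that a positive oval corresponds to a local contribution $-1$ to $EN(C_+)$. You instead transport $\QI$ itself from $C$ to $C_{sm}$ by continuity plus half-integrality. Then (\ref{eq-mikh2}) for $C_{sm}$ is immediate, since $EN((C_{sm})_+)=0$ and each embedded oval has rotation number $\pm1$.

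This transport is legitimate, for three reasons. Every curve in the smoothing family keeps the same real tangency points with the same orders, so Mikhalkin's half-integrality applies throughout. The nodes lie in $(\C^*)^2$, where $d\log|u|\wedge d\log|v|$ is smooth. Near each fixed $w$, the singular part of the integrand does not depend on the parameter.

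Your route skips the elliptic-node sign computation at the cost of an analytic deformation argument. The paper's route stays local and topological, and in effect proves directly that the right-hand side of (\ref{eq-mikh2}) is unchanged by the smoothing; that is exactly what your invariance argument replaces. You were right not to extend the invariance to $C'_{sm}$, whose intersections with $\Tor(\partial P)$ are non-real.

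There are two slips to fix. First, $\Arn_{sm}(C_+)$ is a closed cycle disjoint from $N$, not one with boundary on $N$; that is what makes $({\mathcal C}-2{\mathcal C}_+)\circ\Arn_{sm}(C_+)=0$. Second, the factor $2$ in (\ref{eq-qi1}) is plain arithmetic. What Lemma \ref{lem-cyc} actually supplies is $-2{\mathcal C}_+\circ\Arn_{sm}(C_+)=-{\mathcal C}\circ\Arn_{sm}(C_+)=\QI(C_+)$, i.e., the first equality of (\ref{eq-qi1}).
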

{\bf Proof.}
We start with establishing the second equalities in (\ref{eq-qi1}) and (\ref{eq-qi2}).
Let us slightly deform $\Arn_{sm}(C_+)$ into a
piecewise-smooth
cycle intersecting ${\mathcal C}_+$ and ${\mathcal C}$ transversally. 
\begin{itemize}\item First, for each oval $O$ of the curve $C'_{sm}$, take a vector field $\overline v_O$ on the disk $\delta_O$, surrounded by $O$, that is tangent to the oval $O$ in the direction of its complex orientation and has a unique non-degenerate singular point inside $\delta_O$.
\item Second,
we shift $\delta_O$ along the field $\bi\xi\overline v_O$, $\bi=\sqrt{-1}$, $0<\xi\ll1$,
so that $O$ moves inside $(C'_{sm})_+$.
\end{itemize}
The obtained $2$-cycle $\Arn'_{sm}(C_+)$ intersects the quadrants and the hemispheres $\Tor(\sigma)_{\pm}$, $\sigma\in P^1$, transversally with the following multiplicities:
\begin{itemize}
\item at each singular point of the vector field $\overline v_O$,
where the oval $O$ of $C'_{sm}$ lies in ${\mathbf Q}(\eps_1,\eps_2)$,
we get an intersection of $\Arn'_{sm}(C_+)$ with the quadrant ${\mathbf Q}(\eps_1,\eps_2)$
of multiplicity $1$ or $-1$ according as $O$ is positive or negative (see \cite{Roh});
\item given a point $w\in (C_{sm})_\R\cap\Tor(\sigma)$, $\sigma \in P^1$, the germ $(C_{sm},w)$
moves to a germ of $C'_{sm}$ intersecting $\Tor(\sigma)$ at $k_w$ pairs of complex conjugate points:
exactly $k_w$ of 
these intersection points belong to $(C'_{sm})_+$, and they belong to $\Tor(\sigma)_+$ or $\Tor(\sigma)_-$ 
according as $\eps(C_+,w)=1$ or $-1$; the intersection number of $\Arn'_{sm}(C_+)$
and $\Tor(\sigma)_--\Tor(\sigma)_+$ at such a point equals $-\eps(C_+,w)$.
\end{itemize}

To
finish the proof,
it remains to use Lemma \ref{lem-cyc}
and to notice that the first equality in (\ref{eq-qi2}) follows from Mikhalkin's formula (\ref{eq-mikh2}).
Indeed,
split the set of ovals of $C'_{sm}$ into two parts: the set ${\mathbf O}_h$
of ovals appeared from real branches of $C$ after smoothing out hyperbolic nodes and the set ${\mathbf O}_e$
of ovals appeared from elliptic nodes of $C$.
The contribution of ovals from ${\mathbf O}_h$ to the right-hand side of (\ref{eq-qi2}) coincides with $-\Rot(C_\R)$,
which is clear from the above computation of intersection multiplicities.
The contribution of ovals from ${\mathbf O}_e$ equals $EN(C_+)$.
To confirm this, consider the following simple model: an elliptic node $x^2+y^2=0$ at the origin and a positive oval $V=\{x^2+y^2=\xi^2\}$, $0<\xi\ll1$. The unit tangent vector to $V$ at the point $(-\xi,0)$ is $(0,0,1,0)$
in the coordinates $(\Rea x,\Ima x,\Rea y,\Ima y)$.
The vector directed towards $(C'_{sm})_+$ is then $(0,0,0,1)$. 
Hence, $C_+$ is given by $y=-x\bi$. The intersection number of $C_+$ with (canonically oriented) $\R^2$ equals
$$\det\left(\begin{matrix} 1 & 0 & 0 & -1 \\
0 & 1 & 1 & 0\\
1 & 0 & 0 & 0\\
0 & 0 & 1 & 0\end{matrix}\right)=-1.$$
\proofend

Using the first equality in (\ref{eq-qi1})
and calculations completely similar to the ones we have just made, we obtain the following statement.

\begin{lemma}\label{prop-qi}
One has
\begin{equation}\QI(C_+)=-4EN_+(C_+)-4\Rot_+(C_\R)+2\sum_{w\in C\cap\Tor(\partial P)}k_w\eps(C_+,w),\label{eq-mikh1}\end{equation}
where $EN_+(C_+)$
is the intersection number of $C_+$ with the
oriented quadrant ${\mathbf Q}(+, +)$, 
and $\Rot_+(C_\R)$ is the rotation number of the real branch in ${\mathbf Q}(+,+)$.
\proofend
\end{lemma}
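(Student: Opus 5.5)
The plan is to reproduce the proof of Proposition \ref{lem-int}, keeping only the contributions located in ${\mathbf Q}(+,+)$. We start from the first equality in (\ref{eq-qi1}):
$$\QI(C_+)=4\ell_+(C_+)\big|_{{\mathbf Q}(+,+)}-4\ell_-(C_+)\big|_{{\mathbf Q}(+,+)}+2\sum_{w}k_w\eps(C_+,w).$$
It then suffices to prove
$$\ell_+(C_+)\big|_{{\mathbf Q}(+,+)}-\ell_-(C_+)\big|_{{\mathbf Q}(+,+)}=-EN_+(C_+)-\Rot_+(C_\R).$$
We use the same splitting of the ovals of $C'_{sm}$ lying in ${\mathbf Q}(+,+)$ as in the proof of Proposition \ref{lem-int}. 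The set ${\mathbf O}_h\cap{\mathbf Q}(+,+)$ consists of ovals coming from the real branch of $C$ in ${\mathbf Q}(+,+)$ after smoothing hyperbolic nodes in accordance with the complex orientation. The set ${\mathbf O}_e\cap{\mathbf Q}(+,+)$ consists of small ovals born from elliptic nodes in ${\mathbf Q}(+,+)$.

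For ${\mathbf O}_h$, the key step is to show that the signed count of positive minus negative ovals equals $-\Rot_+(C_\R)$. Smoothing hyperbolic nodes coherently with the orientation does not change the total rotation number of an oriented immersed curve. The resulting embedded oriented circles each have rotation number $\pm1$ relative to the oriented quadrant, and a negative oval (complex orientation induced by the disk) has rotation number $+1$. Hence the sum over these ovals of (positive $-$ negative) is $-\Rot_+$.

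One must also check that the half-branches ending at the points $w\in\partial{\mathbf Q}(+,+)$ close up correctly. This is handled by Lemma \ref{lem-sm}(2): $C'_{sm}$ is moved off $\Tor(\partial P)$ inside the quadrant, and the rotation number is computed for the closed curve in the compactified quadrant, exactly as in Mikhalkin's convention for $\Rot$. The boundary contributions $k_w\eps(C_+,w)$ are already accounted for separately in (\ref{eq-qi1}), so no extra terms appear.

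For ${\mathbf O}_e$, we use the local model computed at the end of the proof of Proposition \ref{lem-int}. An elliptic node in ${\mathbf Q}(+,+)$ contributes $-1$ to the intersection number of $C_+$ with $(\R^\times)^2$ exactly when the oval is positive, and symmetrically $+1$ when it is negative. Since near such a node the only part of $(\R^\times)^2$ met by $C_+$ is ${\mathbf Q}(+,+)$, the intersection is with ${\mathbf Q}(+,+)$ itself, so these ovals contribute $-EN_+(C_+)$. Summing the two contributions and multiplying by $4$ gives (\ref{eq-mikh1}).

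The main obstacle is the boundary bookkeeping in the ${\mathbf O}_h$ step. The real branch in ${\mathbf Q}(+,+)$ touches $\partial{\mathbf Q}(+,+)$, so one must make sure that pushing it inside ${\mathbf Q}(+,+)$ (Lemma \ref{lem-sm}(2)) leaves the rotation number convention consistent with that in (\ref{eq-mikh2}) and (\ref{eq-qi1}). I would handle this by comparing with (\ref{eq-qi2}) and (\ref{eq-mikh2}), where the same issue has already been resolved, and checking that the quadrant-by-quadrant decomposition respects it.
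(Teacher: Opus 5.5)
Your proposal is correct and takes the paper's route: start from the first equality in (\ref{eq-qi1}) and redo the computation from the proof of Proposition \ref{lem-int} restricted to ${\mathbf Q}(+,+)$. The ovals coming from coherent smoothing of hyperbolic nodes contribute $-\Rot_+(C_\R)$, and the small ovals at elliptic nodes contribute $-EN_+(C_+)$ by the local determinant model; your sign for the elliptic contribution matches that determinant, and multiplying by $4$ gives (\ref{eq-mikh1}).
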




\begin{corollary}\label{cor-congruence}
The quantity $(2D_P^2 - \QI(C_+))/4$ is an integer.
\end{corollary}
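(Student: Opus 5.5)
The plan is to read off the residue of $\QI(C_+)$ modulo $4$ from the first equality in (\ref{eq-qi1}) of Proposition \ref{lem-int}, and then compare it with $2D_P^2$ via adjunction. Under assumptions (a) and (b), $C$ is nodal, separating, meets $\Tor(\partial P)$ only at smooth points with even multiplicities $2k_w$, and lies in $|2D_P|$. So Lemma \ref{lem-sm} and Proposition \ref{lem-int} apply, and
$$\QI(C_+)=4\big(\ell_+(C_+)\big|_{{\mathbf Q}(+,+)}-\ell_-(C_+)\big|_{{\mathbf Q}(+,+)}\big)+2\sum_{w}k_w\eps(C_+,w).$$
In particular, $\QI(C_+)$ is an integer. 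Moreover, since $\eps(C_+,w)=\pm1$, we have $2k_w\eps(C_+,w)\equiv 2k_w \pmod 4$, and therefore
$$\QI(C_+)\equiv 2\sum_{w\in C\cap\Tor(\partial P)}k_w \pmod 4 .$$

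Next we compute $\sum_w k_w$ globally. By condition (i), and since all intersections of $C$ with the toric divisors occur at the points $w$, we have
$$\sum_w 2k_w=\sum_{\sigma\in P^1}C\cdot\Tor(\sigma)=2D_P\cdot\Big(\sum_{\sigma}\Tor(\sigma)\Big)=-2D_PK,$$
using that the toric boundary $\sum_\sigma \Tor(\sigma)$ represents $-K$. Hence $\sum_w k_w=-D_PK$, and consequently
$$2D_P^2-\QI(C_+)\equiv 2D_P^2+2D_PK=2(D_P^2+D_PK)\pmod 4 .$$

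Finally, $D_P^2+D_PK=2p_a(D_P)-2$ is even, because the arithmetic genus is an integer (adjunction). Thus $2(D_P^2+D_PK)\equiv 0\pmod 4$, and $(2D_P^2-\QI(C_+))/4$ is an integer.

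The only point requiring care is the identification $\sum_\sigma C\cdot\Tor(\sigma)=-2D_PK$ together with the fact that every intersection point with the boundary is one of the $w$'s. Both are immediate from (b) and (i), so I expect no real obstacle.
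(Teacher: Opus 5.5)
Your proof is correct and follows essentially the same route as the paper. The paper reduces $\QI(C_+)$ modulo $4$ to $2\sum_w k_w\eps(C_+,w)$ via Lemma~\ref{prop-qi}, which is itself derived from the first equality in (\ref{eq-qi1}) that you use directly; it then writes $\sum_w k_w\eps(C_+,w)=-D_PK-2\sum_{\eps(C_+,w)=-1}k_w$ and uses that $D_P^2$ and $-D_PK$ have the same parity, which are exactly your congruence $2k_w\eps(C_+,w)\equiv 2k_w \pmod 4$ together with $\sum_w k_w=-D_PK$, and your adjunction argument.
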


{\bf Proof}.
The statement follows from Lemma \ref{prop-qi}, the relation
$$\sum_{w\in C\cap\Tor(\partial P)}k_w\eps(C_+,w) = -D_PK - 2\sum_{\renewcommand{\arraystretch}{0.6}
\begin{array}{c}
\scriptstyle{w\in C\cap\Tor(\partial P)}\\
\scriptstyle{\eps(C_+,w)=-1}\end{array}}k_w,$$
and the fact that the numbers $D_P^2$ and $-D_PK$ have the same parity.
\proofend

\subsection{Proof of Theorem \ref{th-signs}}

The relation (\ref{eq-signs}) can be restated as the following congruence:
\begin{align}
& g + p_a(D_P) +p_a(2D_P) + \frac{2D^2_P - \QI(C_+)}{4} \nonumber \\
\equiv  \; & e_-(C)+h_-(C) + \#\{w\in C\cap\Tor(\partial P)\ :\ k_w\ \text{odd},\ \eps(C_+,w)=-1\}\mod2, \nonumber
\end{align} 
where
$e_-(C)$ is the number of elliptic nodes of $C$ outside of ${\mathbf Q}(+,+)$.
Using Lemma \ref{prop-qi}, the left-hand side of the above formula rewrites as
$$
\displaylines{
g + p_a(D_P) + p_a(2D_P) + E_+(C_+) + \Rot(C_\R)
+ \frac{1}{2}\left(D^2_P - \sum_{w\in C\cap\Tor(\partial P)}k_w\eps(C_+,w)\right) \cr
= g + p_a(2D_P) + E_+(C_+) + \Rot(C_\R)
+ \frac{1}{2}\left(-D_PK - \sum_{w\in C\cap\Tor(\partial P)}k_w\eps(C_+,w)\right) -1
}
$$
\begin{equation} = g + p_a(2D_P) + E_+(C_+) + \Rot(C_\R) + \sum_{\renewcommand{\arraystretch}{0.6}
\begin{array}{c}
\scriptstyle{w\in C\cap\Tor(\partial P)}\\
\scriptstyle{\eps(C_+,w)=-1}\end{array}}k_w + 1.\label{eq-signs2}\end{equation}
The following congruences hold:
$$E_+(C_+) \equiv e_+(C)\mod 2,$$
$$\Rot_+(C_\R) \equiv h_+(C)+1\mod 2,$$
$$\sum_{\renewcommand{\arraystretch}{0.6}
\begin{array}{c}
\scriptstyle{w\in C\cap\Tor(\partial P)}\\
\scriptstyle{\eps(C_+,w)=-1}\end{array}}k_w \equiv \#\{w\in C\cap\Tor(\partial P)\ :\ k_w\ \text{odd},\ \eps(C_+,w)=-1\}\mod 2,$$
$$e_-(C)+h_-(C) \equiv e_+(C)+h_+(C)+p_a(2D_P)+g\mod 2.$$
In particular, the second one comes from the observation that $C$ has one real branch in ${\mathbf Q}(+,+)$, and when we smooth out a hyperbolic node in accordance with the complex orientation (see \cite{Fied}) the rotation number is preserved, while the number of real branches changes by $\pm1$.
Combining the formula (\ref{eq-signs2}) with these congruences, we derive the required relation (\ref{eq-signs}).
\proofend

\section{Another definition of quantum index}\label{sec-another}

Following
our considerations in Section \ref{sec-cqi}, we suggest
another definition of the quantum index based on formula (\ref{eq-qi2}).
The new quantum index
is defined for oriented real curves on toric surfaces with the standard real structure
and on real del Pezzo surfaces with a non-empty real point set.
Given a real reduced curve $\newDelta$ 
representing the anti-canonical class
of the surface, the curves under consideration are supposed to satisfy the following conditions:
\begin{enumerate}\item[(I1)] all intersection points 
of the curves and $\newDelta$ are smooth points of the curves and of $\newDelta$;
\item[(I2)] the intersection multiplicities at real intersection points of the curves and $\newDelta$ are even;
\item[(I3)] the only singularities of the curves are ordinary nodes.
\end{enumerate}


\subsection{Homological meaning of quantum index}\label{sec-hom}
Let $X$ be a simply connected algebraic surface, $N\subset X$
an embedded piecewise-smooth circle such that $X\setminus N$ is smooth,
and $\Tub(N)$ a small open tubular neighborhood of $N$.
The Mayer-Vietoris homological sequence for the cover $X=(X\setminus N)\cup\Tub(N)$ yields an exact sequence
$$0\to H_2(\Tub(N)\setminus N)\simeq\Z\overset{\ina'_*}{\to} H_2(X\setminus N)\overset{\ina''_*}{\to} H_2(X)\to0.$$
Since $H_2(X)$ is free abelian, the sequence splits: $H_2(X\setminus N)\simeq H_2(X)\oplus\Z$.
Then the intersection number with a fixed class $\alpha\in H_2(X,N)$ such that $\alpha\circ \ina'_*(c)\ne0$, $c\in H_2(\Tub(N)\setminus N)$ a generator, defines a {\it quantum index homomorphism}
$$\QI_\alpha: H_1(X\setminus N)\to\Z,$$
which descents to injective maps $b+\ina'_*H_2(\Tub(N)\setminus N)\to\Z$ on each coset $b+\ina'_*H_2(\Tub(N)\setminus N)$. More precisely, we have isomorphisms
$$H_2(X\setminus N)\simeq H^2_{comp}(X\setminus N;\Z)\simeq H^2(X,N;\Z)\simeq \Hom(H_2(X,N),\Z),$$
and hence a non-degenerate pairing
$$H_2(X\setminus N)\otimes H_2(X,N)\to\Z.$$

\subsection{Quantum index for curves on toric surfaces}\label{sec-toric}
In the setting of Section \ref{sec-cqi}, given a toric surface $\Tor(P)$, we take $\alpha=[{\mathcal C}]$ in which case we have $\alpha\circ c=\pm4$. In \cite{Mir}, Mikhalkin restricts the quantum index function to oriented real curves
on toric surfaces that cross toric divisors in real and purely imaginary points (a corrected statement for the case of purely imaginary points is found in \cite{Bl}).

In fact, the quantum index is well-defined for the oriented real curves
$(C,C_+)\subset\Tor(P)$
that satisfy conditions (I1)-(I3).
Note that each real branch of such a curve is located in a certain closed quadrant and is contractible in this quadrant.
Another remark is that the construction of the deformed curves $C_{sm}$ and $C'_{sm}$ described in Lemma \ref{lem-sm},
works equally well for oriented real curves satisfying conditions (I1)-(I3). Thus,
we can define the $2$-chain $\Sigma$ formed by suitably oriented disks bounded in $\Tor(P)_\R\setminus N$ by the ovals of $C'_{sm}$.
Then, we
put
$$\Arn(C_+)=\Closure((C'_{sm})_+)+\Sigma$$ and define the quantum index $\QI(C_+)$ of $C_+$ as follows:
$$\QI(C_+)=-{\mathcal C}\circ\Arn(C_+).$$
This is a reformulation of the definitions of quantum index that appeared in \cite{Mir, Bl25}.

\subsection{Quantum index in a non-toric setup}\label{sec-nt}

Now, let $X$ be a real del Pezzo surface with a non-empty real part $X_\R$, and let $E\in|-K_X|$ be a smooth real (elliptic) curve with two real branches $B_1$, $B_2$.
Notice that that curve $E$ is separating,
and denote by $E_+$ and $E_-$
the connected components of $E\setminus E_\R$.
We equip $B_1$ and $B_2$ with the complex orientations induced from $E_+$
and understand $B_1+B_2$ as the oriented boundary of the closure of $E_+$.

Recall that each connected component of $X_\R$ is homeomorphic to either the sphere $S^2$, or the torus $(S^1)^2$,
or the connected sum $\#_s\PP^2_\R$, $1\le s\le 9$, of real projective planes  
(the classification of topological types of real parts of real del Pezzo surfaces is due to A. Commessatti \cite{Com};
see details in \cite[Theorems 6.11.11 and 17.3]{DIK}).
Since $E_\R$ represents the class dual to $w_1(X_\R)$, see \cite[Page 498]{BH}, the complement $X_\R\setminus E_\R$ is orientable. Denote by ${\mathbf S}(X,E)$ the set of connected components of $X_\R\setminus E_\R$.

In this section, for oriented real curves $(C,C_+) \subset X$
satisfying conditions (I1)-(I3) with respect to $E$, we define a generalized quantum index
\begin{equation}\widehat\QI_{X,E}(C_+)\in\Z\oplus\bigoplus_{M\in{\mathbf S}(X,E)}H_1(M; \Z).\label{e-gqi}\end{equation}
This quantum index can be used in definition of refined real enumerative invariants in the style of \cite{Mir} and \cite{ISalg}
for del Pezzo surfaces.

Recall that the construction of the deformed curves $C_{sm}$ and $C'_{sm}$, described in Lemma \ref{lem-sm}
for oriented real curves on toric surfaces, 
works equally well for oriented real curves $(C,C_+)\subset X$
satisfying conditions (I1)-(I3).

The first (integral-valued) coordinate $\QI_{X,E}(C_+)$ of $\widehat\QI_{X,E}(C_+)$ is
defined using formula (\ref{eq-qi2}),
where we consider only the left-most and the right-most sides,
and the non-degenerate pairing described in Section \ref{sec-hom}.
For such a definition, we
need
\begin{itemize}
\item to fix an appropriate $2$-cycle ${\mathcal C}$ relative to $B_1$,
\item and to construct in a canonical way absolute $2$-cycles $\Arn(C_+)$ for oriented real curves $(C,C_+)$
under consideration.
\end{itemize}
We accomplish
these two tasks in Sections \ref{sec-cycle} and \ref{sec-surfaces}, respectively.

The homological coordinate $\QI^{hom}_{X,E}(C_+)$ in (\ref{e-gqi}) is defined by
the real branches
of $C'_{sm}$ equipped with the complex orientation ({\it i.e.}, the one induced by the orientation of $(C'_{sm})_+$).

%


\subsubsection{Construction of a relative cycle ${\mathcal C}$}\label{sec-cycle}

Denote by $X^E_\R$ the union of the connected components of $X_\R$ that
have non-empty intersection with $E_\R$.
The following statement is an immediate consequence of \cite[Theorem 3.2]{Orevkov}.

\begin{lemma}\label{lem-cyc2}
The connected components of $X^E_\R\setminus E_\R$
can be oriented {\rm (}in a unique possible way{\rm })
so that their total boundary becomes equal to $2B_2-2B_1$.
\proofend
\end{lemma}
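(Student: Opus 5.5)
The lemma is stated as an immediate consequence of \cite[Theorem 3.2]{Orevkov}, so the plan is to reduce it to that result and check the boundary bookkeeping.

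First, note that $X_\R\setminus E_\R$ is orientable, since $E_\R$ is dual to $w_1(X_\R)$ \cite{BH}. Hence every component $M$ of $X^E_\R\setminus E_\R$ admits exactly two orientations. Orient each such $M$ arbitrarily and take its closure as a $2$-chain. Its boundary is supported on $E_\R=B_1\cup B_2$.

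Next, compute multiplicities. Each $B_i$ is a smooth circle in $X_\R$ and is two-sided or one-sided in its component. Near a point of $B_i$, the two local sides belong to components $M,M'$ (possibly $M=M'$). The coefficient of $B_i$ in the total boundary is therefore $0$ or $\pm2$ with respect to the complex orientation of $B_i$. It is $\pm2$ exactly when the orientations on the two sides fail to glue across $B_i$. Since $E_\R$ is dual to $w_1$, crossing $E_\R$ must reverse orientation: otherwise the orientations would extend across $E_\R$, and the loop intersection numbers with $E_\R$ would contradict $w_1$ duality on loops meeting $E_\R$ once. So the orientations can never glue across $E_\R$, and each coefficient is $\pm2$.

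The remaining content is the choice of signs. We must show the orientations of the components can be chosen consistently so that $B_2$ gets $+2$ and $B_1$ gets $-2$ relative to the complex orientations induced from $E_+$. This is precisely Orevkov's complex-orientation statement, \cite[Theorem 3.2]{Orevkov}. That theorem says that for a separating curve dual to $w_1$, the complex orientations of the branches are the boundary orientations of a consistent orientation of the complement; in our situation the two branches appear with opposite signs, giving $2B_2-2B_1$ up to a global swap of $B_1$ and $B_2$. I expect this step to be the main obstacle, because one must match Orevkov's sign conventions with ours. Concretely, I would verify that $[E_+]$, the closure with boundary $B_1+B_2$, is consistent with the relative cycle $E_+ - (\text{chain})$ being a closed class, by comparing intersections with $E$ as in Lemma~\ref{lem-cyc}.

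Uniqueness then follows from propagation. Once the sign on one side of $B_1$ is fixed, it propagates through every component of $X^E_\R\setminus E_\R$, because each such component has some $B_i$ on its boundary. An alternative orientation would differ on some component $M$ and change the coefficient of a boundary branch of $M$ from $\pm2$ to $0$ or $\mp2$, which contradicts the required total $2B_2-2B_1$.
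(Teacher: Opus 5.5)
\textbf{There is a genuine gap in the existence part.} Your uniqueness argument is correct, and reducing existence to Orevkov's Theorem~3.2 is what the paper does. However, the existence argument you actually write has a false step and leaves the decisive point unproved.

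\emph{The false step.} You claim that $w_1$-duality forbids the orientations from ever gluing across $E_\R$, so that every coefficient is automatically $\pm2$. This is wrong. Duality only says that along every loop transverse to $E_\R$ the number of crossings where the orientations glue is even. In other words, the union of branches across which they glue is null-homologous mod~$2$; it need not be empty. In the paper's own example ($X=\PP^2$, $E$ a cubic with pseudo-line $B_1$ and oval $B_2$), the disk bounded by $B_2$ is a component of $X^E_\R\setminus E_\R$. One of its two orientations glues with the annulus, giving coefficient $0$ on $B_2$. One can show that some choice with all coefficients $\pm2$ exists, and that the only remaining freedom is reversing everything inside a connected component of $X_\R$. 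So if $B_1,B_2$ lie in different components of $X_\R$ the lemma is trivial; otherwise its entire content is that the two signs are opposite.

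\emph{The unproved point.} That opposite-sign claim is exactly what you only assert. Your paraphrase of Orevkov's theorem (complex orientations are the boundary orientations of one consistent orientation of the complement) would, if anything, give equal signs. The check you propose in the style of Lemma~\ref{lem-cyc} cannot detect the sign either: with equal signs, $\Sigma+E_--E_+$ would be an absolute cycle whose pairing with real curves with empty real part is still $0$.

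\textbf{A self-contained way to supply the sign.} Take a real section $s$ of $-K_X$ with divisor $E$ and put $\Omega=1/s$. This is a real meromorphic $2$-form with a simple pole along $E$ and no zeros, so its restriction to $X_\R\setminus E_\R$ is a nowhere vanishing real $2$-form and orients all components at once.

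In real local coordinates $(z,w)$ with $E=\{w=0\}$, write $\Omega=h\,w^{-1}dz\wedge dw$ with $h\neq0$. On $X_\R$, with coordinates $(x,u)$, the orientation is $\sign(h)\sign(u)\,dx\wedge du$. Both sides of $B_i$ then induce the same boundary orientation $\sign(h)\,\partial_x$ on $B_i$. Hence the coefficient of $B_i$, with its complex orientation, is $2\kappa$ times the sign of the residue $\eta=\mathrm{Res}_E\Omega$ on $B_i$, where $\kappa=\pm1$ is a universal constant.

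By adjunction, $\eta$ is a nowhere vanishing real holomorphic $1$-form on $E$, hence closed. Stokes on $\overline{E_+}$ gives $\int_{B_1}\eta+\int_{B_2}\eta=0$, and both integrals are nonzero. So the signs are opposite, and after a global reversal the boundary is $2B_2-2B_1$. In the toric case, the analogous form $dx\wedge dy/(xy)$ induces exactly the canonical orientations of the quadrants.
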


The connected components of $X^E_\R\setminus E_\R$ oriented as in Lemma \ref{lem-cyc2} naturally determine a $2$-chain
$\Sigma\subset X^E_\R$
and a $2$-cycle
$${\mathcal C}=\Sigma+E_--E_+$$
relative to $B_1$.
Note that $\partial{\mathcal C}=-4B_1$.


\subsubsection{Construction of Arnold-Rokhlin surfaces $\Arn(C_+)$}\label{sec-surfaces}

As a preparation for the construction of Arnold-Rokhlin surfaces, we extend the data $(X,E)$ with extra elements.
Let $M\in{\mathbf S}(X,E)$ be a connected component 
such that $H_1(M) \ne 0$
(the homology groups considered are with integer coefficients). 
The latter group is free abelian, and we can choose smooth oriented circles $O^M_i\subset M$, $1\le i\le r(M)$,
representing a basis of $H_1(M)$.
Since $X$ is simply connected, for each circle $O^M_i$,
there exists an immersed oriented disk $\delta^M_i\subset X\setminus B_1$ such that $\partial\delta^M_i=O^M_i$.
From now on, we fix a collection ${\mathcal S}=\{(O^M_i,\delta^M_i)\}_{M\in{\mathbf S}(X,E)}^{i=1,...,r(M)}$,
being referred to as {\it complementary data}. 

Let $(C,C_+)\subset X$ be an oriented real curve
satisfying conditions (I1)-(I3).
Pick a connected component $M \in {\mathbf S}(X,E)$. 
If $H_1(M) = 0$ ({\it i.e.}, $M$ is diffeomorphic to an open disk or a sphere), 
we take 
a $2$-chain $\Sigma_M$ assembled from 
disks in $M$ that are bounded by the real branches of $C'_{sm}$ 
in $M$
and oriented so that the induced orientations of the corresponding 
real branches are opposite to their complex orientation. 
Assume now that $H_1(M) \ne 0$.
For each (oriented) real branch $O \subset M$ of $C'_{sm}$, we perform the following procedure.
If $[O] = 0 \in H_1(M)$,
then $O$ bounds a uniquely determined component of $M \setminus O$,
and we attach the closure of this (appropriately oriented) component to $(C'_{sm})_+$.
If $[O] \ne 0 \in H_1(M)$,
then
$$[O]=\sum_{i=1}^{r(M)}n_i[O^M_i],$$
and the $1$-chain
$$O-\sum_{i=1}^{r(M)}n_iO^M_i$$
bounds a uniquely determined $2$-chain $\Sigma_O$ composed of
the closures of certain (appropriately oriented) components of the complement
$$M\setminus\left(O\cup\bigcup_{i=1}^{r(M)}O^M_i\right).$$
We attach to $(C'_{sm})_+$ the $2$-chain
$$-\Sigma_O+\sum_{i=1}^{r(M)}n_i\delta^M_i$$
along the real branch $O$.

Applying the procedure described above for all connected components $M \in {\mathbf S}(X, E)$ 
and all real branches $O \subset M$ of $C'_{sm}$, we obtain
a topological surface which is denoted by $\Arn_{X, E}(C_+)$,
or simply by $\Arn(C_+)$,
and is called an {\it Arnold-Rokhlin surface of $C_+$}.
This surface is said to be
{\it associated with the complementary data ${\mathcal S}$}
(though we do not specify $\mathcal S$ in the notation).

The integral-valued quantum index $\QI_{X, E}(C_+)$ of $C_+$ is defined by
$$\QI_{X,E}(C_+)=-{\mathcal C}\circ\Arn(C_+).$$
The natural question on the dependence of $\QI_{X,E}(C_+)$ of the choice of complementary 
data ${\mathcal S}$ (and choices made in the procedure above) is answered by the following self-evident lemma. 

\begin{lemma}\label{lem-comcol}
For a given 
$\theta\in\bigoplus_{M\in{\mathbf S}(X,E)}H_1(M)$, the difference $$\QI'_{X,E}(C_+)-\QI''_{X,E}(C_+)$$ between integral-valued quantum indices defined by two complementary data ${\mathcal S}'$ and ${\mathcal S}''$, does not depend on the choice
of an oriented real curve $(C,C_+)\subset X$ satisfying conditions (I1)-(I3) and such that
$\QI^{hom}_{X,E}(C_+)=\theta$.
\proofend
\end{lemma}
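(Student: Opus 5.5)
The plan is to compute the difference of the two Arnold-Rokhlin surfaces explicitly and to observe that it is built only from the complementary data and the homology classes of the real branches, never from the curve itself. Fix $(C,C_+)$ with $\QI^{hom}_{X,E}(C_+)=\theta$, and let $\Arn'(C_+)$ and $\Arn''(C_+)$ be the Arnold-Rokhlin surfaces associated with ${\mathcal S}'=\{(O'^M_i,\delta'^M_i)\}$ and ${\mathcal S}''=\{(O''^M_i,\delta''^M_i)\}$. Both contain the same piece $\Closure((C'_{sm})_+)$, and they agree on every component $M$ with $H_1(M)=0$ and on every real branch $O$ with $[O]=0$. Hence
$$\Arn'(C_+)-\Arn''(C_+)=\sum_{M}\sum_{O\subset M,\ [O]\ne0}\Bigl(-\Sigma'_O+\Sigma''_O+\sum_i n'_i\delta'^M_i-\sum_i n''_i\delta''^M_i\Bigr).$$
This is an absolute $2$-cycle in $X\setminus B_1$, because its boundary is $\sum_O\bigl(\sum_i n'_iO'^M_i-\sum_i n''_iO''^M_i\bigr)-\sum_O\bigl(\sum_i n'_iO'^M_i-\sum_i n''_iO''^M_i\bigr)=0$.

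The key step is to show that the class of each summand in $H_2(X\setminus B_1)$ depends only on $[O]\in H_1(M)$. Expand $[O]$ in both bases: $[O]=\sum n'_i[O'^M_i]=\sum n''_i[O''^M_i]$. Then $-\Sigma'_O+\Sigma''_O$ is a $2$-chain in $M$ whose boundary is $\sum n'_iO'^M_i-\sum n''_iO''^M_i$. This boundary is null-homologous in $M$, and it determines the chain up to a multiple of $[M]$ when $M$ is closed; since $M$ is a component of $X_\R\setminus E_\R$, it is open unless it is a component of $X_\R$ disjoint from $E$. Consequently $-\Sigma'_O+\Sigma''_O$ coincides with a fixed chain $\tau_M(\sum n'_iO'^M_i-\sum n''_iO''^M_i)$, which is independent of $O$ because the integers $n'_i,n''_i$ are determined by $[O]$. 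Each summand is therefore a fixed cycle $Z_M([O])$, which is additive in $[O]$. Summing over all branches gives
$$\Arn'-\Arn''=\sum_M Z_M\Bigl(\sum_{O\subset M}[O]\Bigr)=\sum_M Z_M(\theta_M),$$
a cycle determined by $\theta$ and the two complementary data only.

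Finally, intersect with ${\mathcal C}$. The pairing of Section~\ref{sec-hom} between $H_2(X\setminus B_1)$ and $H_2(X,B_1)$ is well defined on homology classes, so
$$\QI'_{X,E}(C_+)-\QI''_{X,E}(C_+)=-{\mathcal C}\circ\sum_M Z_M(\theta_M),$$
which is independent of $(C,C_+)$.

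The main obstacle is the ambiguity in a closed component $M$, which is a component of $X_\R$ not meeting $E$, so that $\Sigma_O$ is defined only up to $[M]$. The paper says $\Sigma_O$ is ``uniquely determined'', so the construction must fix a convention, and I expect the ambiguity to be harmless anyway: ${\mathcal C}$ is supported on $X^E_\R\cup E$, which is disjoint from such an $M$, so changing $\Sigma_O$ by $[M]$ does not affect the intersection number. Checking this compatibility, together with the well-definedness of intersection numbers with relative cycles for non-transverse immersed disks (handled by small perturbation away from $B_1$), is the only delicate point; everything else is bookkeeping.
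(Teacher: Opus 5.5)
Your proof is correct. It is the argument the paper leaves implicit when it calls the lemma self-evident: along each branch with $[O]\ne0$ the two Arnold-Rokhlin surfaces differ by a cycle fixed by $[O]$ and the two complementary data, additively in $[O]$, up to multiples of $[M]$ for a closed component $M$, which ${\mathcal C}$ does not meet. Pairing with ${\mathcal C}$ therefore depends only on $\theta$.

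There is one sign slip, inherited from a typo in the paper. Since $\partial\Sigma_O=O-\sum_i n_iO^M_i$, the chain attached along $O$ must be $-\Sigma_O-\sum_i n_i\delta^M_i$ for $\Arn(C_+)$ to be a cycle. So the $\delta$-terms in your displayed difference should have the opposite signs; as written, its boundary is $2\sum_O(\sum_i n'_iO'^M_i-\sum_i n''_iO''^M_i)$, not $0$. Your boundary computation tacitly uses the corrected sign, and nothing else in the argument changes.
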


In Example \ref{ex-gqi} below, we show that, in general, the integral-valued and homological parts of
the generalized quantum index $\widehat\QI_{X,E}(C_+)$
do not determine each other.

\subsubsection{An important particular case}\label{sec-part}

Here, we specify certain real del Pezzo surfaces and oriented real curves
on them which admit a
simpler construction of Arnold-Rokhlin surfaces
and for which the integral-valued quantum index can be computed by 
an expression similar to the middle term of formula (\ref{eq-qi2}). 
These new Arnold-Rokhlin surfaces are denoted by $\Arn^0(C_+)$. 

Assume that
\begin{enumerate}\item[(A1)] each connected component of $X_\R$ is diffeomorphic either to $\PP^2_\R$, or to $S^2$
(for example, $X$ is the projective plane $\PP^2$ equipped with the standard real structure,
or a smooth two-component real cubic surface in $\PP^3$); 
\item[(A2)] an oriented real curve $(C,C_+)\subset X$ satisfies conditions (I1)-(I3) with respect to $E$.
\end{enumerate} 

Observe that under the above conditions, each real branch $O$ of $C'_{sm}$ bounds a disk $\delta_O$ in $X_\R$.
To obtain $\Arn^0(C_+)$, we attach these disks (sometimes slightly modified) to $(C'_{sm})_+$.
If $O$ is a real branch of $C'_{sm}$ such that $O\not\subset X^E_\R$,
then, $O$ lies in a spherical component $Y$ of $X_\R$, and we attach to $(C'_{sm})_+$
any of the two disks bounded by $O$ in $Y$.

Let a real branch $O$ of $C'_{sm}$ lie in a component $Y$ of $X^E_\R$.
Suppose that $Y\simeq\PP^2_\R$.
Then, $Y$ contains a real branch $B_i$ of $E$ ($i\in\{1,2\}$) which is not contractible in $Y$, and the complement $Y\setminus B_i$ is diffeomorphic to an open disk. Thus, $O$ bounds a disk $\delta_O\subset Y\setminus B_i$. If $B_1\cap\delta_O=\emptyset$, we attach the disk $\delta_O$ to $(C'_{sm})_+$. If $i=2$ and $B_1\subset\delta_O$ (in such a case $B_1$ bounds a disk $\delta_1\subset\delta_O$), we attach to $(C'_{sm})_+$ the disk $\delta_O$ and perform the following modification in order to separate it from $B_1$ (and from $E$). 
Choose a smooth vector field $\overline v$ on $\delta_O$ which is identically zero outside a small open tubular neighborhood
$\Tub(B_1)\subset Y$, does not vanish in $\Tub(B_1)$, is transversal to $B_1$, and satisfies there $|\overline v|\ll1$.
Then, we shift the disk $\delta_O$ along the vector field $\bi\overline v$.

Suppose that $Y\simeq S^2$. If $B_1\cap Y=\emptyset$ and $B_2\subset Y$, we attach to $(C'_{sm})_+$ the disk bounded by $O$ in $Y\setminus B_2$. If $B_1\subset Y$, we attach to $(C'_{sm})_+$ the disk bounded by $O$ in $Y\setminus B_1$.

The following lemma compares the integral-valued quantum indices defined by $\Arn^0(C_+)$ and by $\Arn(C_+)$ constructed in Section \ref{sec-surfaces}.

\begin{lemma}\label{lem-gqi0}
Under assumptions (A1) and (A2), for an appropriate choice of the complementary data, one has $${\mathcal C}\circ\Arn(C_+)={\mathcal C}\circ\Arn^0(C_+)$$ for all oriented real curves $(C,C_+)\subset X$ under consideration.
\end{lemma}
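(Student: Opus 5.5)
The plan is to compare the two surfaces branch by branch. Both $\Arn(C_+)$ and $\Arn^0(C_+)$ consist of $\Closure((C'_{sm})_+)$ with, for each real branch $O$ of $C'_{sm}$, some $2$-chain attached along $O$. For each $O$, the difference of the two attached chains is therefore an absolute $2$-cycle $Z_O$ in $X\setminus B_1$, and
$${\mathcal C}\circ\Arn(C_+)-{\mathcal C}\circ\Arn^0(C_+)=\sum_O{\mathcal C}\circ Z_O .$$
It suffices to choose the complementary data so that ${\mathcal C}\circ Z_O=0$ for every $O$. By Section \ref{sec-hom}, the pairing with ${\mathcal C}$ is well defined on $H_2(X\setminus B_1)$, so the value ${\mathcal C}\circ Z_O$ depends only on the class of $Z_O$ there.

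First I determine which components $M\in{\mathbf S}(X,E)$ have $H_1(M)\ne0$ under (A1). If $Y\simeq S^2$, the components of $Y\setminus E_\R$ are disks or annuli. If $Y\simeq\PP^2_\R$ containing the one-sided branch $B_i$, the component $Y\setminus B_i$ is a disk; if the other branch also lies in $Y$ (as an oval), the outer component is an annulus. So every $M$ with nonzero $H_1$ is an annulus, and $H_1(M)\cong\Z$ is generated by a core circle $O^M_1$ parallel to a boundary branch of $E$. I choose the complementary data as follows. $O^M_1$ is a circle parallel to a branch $B$ of $E$ bounding $M$ with $B\neq B_1$ when possible. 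The disk $\delta^M_1$ is the disk bounded by $O^M_1$ in $Y$ on the side prescribed by the construction of $\Arn^0$: the disk in $Y\setminus B_1$ (or in $Y\setminus B_2$ when $B_1\not\subset Y$), or in $Y\setminus B_i$ for $Y\simeq\PP^2_\R$. This disk is pushed off $B_1$ along $\bi\overline v$ exactly as in the definition of $\Arn^0$ whenever it contains $B_1$. With this choice, $\delta^M_1$ is literally the $\Arn^0$-disk of the curve $O^M_1$.

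Next come the case checks for a branch $O\subset M$.

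If $[O]=0$ in $H_1(M)$, or $M$ is a disk, then $O$ bounds a disk $\delta$ in $M$. In $\Arn$ this disk is attached; in $\Arn^0$ either the same disk is attached, or the complementary disk in $Y$ (the one containing $E$-branches) is. In the first situation $Z_O=0$. In the second, $Z_O$ is $\pm[Y]$ (for $Y=S^2$, pushed off $B_1$), or $Z_O$ is empty because the rule selects the disk avoiding $B_1$; I expect these alternatives to be excluded, since the $\Arn^0$ rule picks the disk not meeting $B_1$, which is the one inside $M$.

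If $[O]=n[O^M_1]\neq0$, then $\Arn$ attaches $-\Sigma_O+n\delta^M_1$, while $\Arn^0$ attaches $\delta_O$. Since $O$ and $O^M_1$ are isotopic in the annulus $M$, $\delta_O-\Sigma_O$ and $\delta^M_1$ differ only by the annular region between $O$ and $O^M_1$, and here $n=\pm1$. So $Z_O$ is null-homologous as a chain in $Y\setminus B_1$ after the same $\bi\overline v$-pushoff, and hence ${\mathcal C}\circ Z_O=0$.

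The main obstacle will be the spherical components $Y$ meeting $E$, where $Z_O$ might be $\pm[Y]$. There I will compute ${\mathcal C}\circ[Y]$ directly using $\partial{\mathcal C}=-4B_1$ and the orientation from Lemma \ref{lem-cyc2}. Alternatively, I will show that the choices are forced to coincide, by checking the orientation conventions and the sign of $n$ carefully.
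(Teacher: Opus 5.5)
Your proposal is essentially the paper's proof. It makes the same choice of $\delta^M_1$ as the $\Arn^0$-disk of a core circle $O^M_1$ of the unique annular component, and the same branch-by-branch comparison showing that $\Arn(C_+)-\Arn^0(C_+)$ is null-homologous in $X\setminus B_1$.

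The case you leave open is what the paper calls ``by construction'', and you should close it with your second option, not by computing ${\mathcal C}\circ[Y]$. For a sphere $Y$ carrying both $B_1$ and $B_2$, any push-off of $Y$ off $B_1$ pairs with ${\mathcal C}$ to a value $\equiv 2 \bmod 4$, so never $0$; the lemma holds only because that discrepancy never occurs. The right reason is not that the $\Arn^0$ rule avoids $B_1$. It is that each $\Arn^0$ rule takes the unique disk bounded by $O$ inside an open disk containing the component $M\ni O$: either $Y\setminus B_i$, or the component of $Y\setminus B_1$ (resp.\ $Y\setminus B_2$) containing $O$. So that disk is the $\Arn$-disk whenever $O$ bounds a disk in $M$. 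For spheres disjoint from $E$, a discrepancy $\pm[Y]$ is disjoint from ${\mathcal C}$ and pairs to zero.
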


{\bf Proof.}
Under condition (A1), either all components of $X^E_\R\setminus E_\R$ are open disks, or all but one components are open disks, while the remaining component $M$ is an open annulus. In the former case, $\Arn(C_+)=\Arn^0(C_+)$ by construction. In the latter case, we have to consider only real branches of $C'_{sm}$ representing generators of $H_1(M)\simeq\Z$. Let us take the following complementary data:
an oriented smooth circle
$O^M_1$
in $M$ representing a generator of $H_1(M)$,
and the disk $\delta^M_1$ with the boundary $O^M_1$ which is chosen along the rules presented before the lemma for real branches of $C'_{sm}$. It is easy to see that the difference between $\Arn(C_+)$ and $\Arn^0(C_+)$ is a $2$-cycle contractible in $X_\R\setminus B_1$.
\proofend

We give a formula for computing the integral-valued quantum index $-{\mathcal C}\circ\Arn^0(C_+)$ under some extra assumption. 
For an oriented real curve $(C,C_+)$
satisfying conditions (I1)-(I3), a real branch $O\subset X^E_\R$ of $C'_{sm}$
is said to be {\it negative} (respectively, {\it positive}) if
\begin{itemize}
\item either $\delta_O$ is contained in a connected component of $X_\R \setminus E_\R$
and the complex orientation of $O$ is (respectively, is not)
induced by the canonical orientation of $\delta_O$ (see Lemma \ref{lem-cyc2});
\item or $\delta_O$
contains $B_i$ ($i=1,2$), and the orientation of $\delta_O$ induced by the complex orientation of $O$ is (respectively, is not)
induced by the canonical orientation of the disk $\delta_i$ surrounded by $B_i$.
\end{itemize}
Furthermore, for each point $w\in C_\R\cap E_\R$, we define $\eps(C_+,w)=\pm1$ as follows: we set $\eps(C_+,w)=1$ if either $w\in B_2$ and the complex orientations of $C_\R$ and $E_\R$ at $w$ are coherent, or
$w\in B_1$ and the complex orientations of $C_\R$ and $E_\R$ at $w$ are non-coherent, otherwise we set $\eps(C_+,w)=-1$.


\begin{proposition}\label{prop-generalized_qi}
Assume that $C\cap E\subset E_\R$. Then,
$$
-{\mathcal C}\circ\Arn^0(C_+) = \ell_+(C_+)-\ell_-(C_+)
+ \sum_{w \in C \cap E_\R}\eps(C_+,w)\frac{(C\cdot E)_w}{2},
$$
where $\ell_+(C_+)$ and $\ell_-(C_+)$ are, respectively, the numbers of
positive and negative real branches of $C'_{sm}$. 
\end{proposition}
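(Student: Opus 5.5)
We will compute the intersection number ${\mathcal C}\circ\Arn^0(C_+)$ by perturbing $\Arn^0(C_+)$ into a cycle transversal to ${\mathcal C}=\Sigma+E_--E_+$, in the same way as in the proof of Proposition \ref{lem-int}. The local contributions split into two groups. The first group consists of intersections of the perturbed disks with $\Sigma\subset X^E_\R$. The second group consists of intersections of the perturbed $(C'_{sm})_+$ with $E_\pm$. There are no other intersections: by the hypothesis $C\cap E\subset E_\R$, after passing to $C'_{sm}$ all intersections of $C'_{sm}$ with $E$ come from the germs at real points $w\in C_\R\cap E_\R$. Real branches lying in components of $X_\R$ disjoint from $E_\R$ do not meet ${\mathcal C}$ at all. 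Hence only branches $O\subset X^E_\R$ contribute, which agrees with the definition of $\ell_\pm$.

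\emph{Step 1 (disk contributions).} For each real branch $O\subset X^E_\R$ of $C'_{sm}$, take a vector field $\overline v_O$ on $\delta_O$ that is tangent to $O$ along its complex orientation and has a single non-degenerate zero. Shift $\delta_O$ along $\bi\xi\overline v_O$. The only intersection with $X_\R$ is then at the zero, and its sign is $\pm1$ by Rokhlin's computation. The sign is measured with respect to the orientation of the component of $X_\R\setminus E_\R$ containing the zero, fixed as in Lemma \ref{lem-cyc2}. Since $\partial\Sigma=2B_2-2B_1$, $\Sigma$ has multiplicity $1$ on each component: Lemma \ref{lem-cyc2} orients the components, with coefficients $\pm1$. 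If $\delta_O$ lies in one component of $X_\R\setminus E_\R$, this gives $+1$ for positive and $-1$ for negative branches, with the global minus sign in $-{\mathcal C}\circ\Arn^0$ taken into account.

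If $\delta_O\supset B_i$, I place the zero of $\overline v_O$ inside $\delta_i$. The sign is then read off against the canonical orientation of $\delta_i$, which matches the second clause of the definition of positivity. When $i=2$ and $B_1\subset\delta_O$, the extra shift along $\bi\overline v$ near $B_1$ must be checked. Being supported in $\Tub(B_1)$ and non-vanishing there, it creates no new intersections with $X_\R$. I also expect it to create none with $E_\pm$ for $|\overline v|$ small, since $E$ near $B_1$ is the complexification of $B_1$ and $\overline v$ is transversal to $B_1$. This is the point I expect to be the main obstacle: one must verify that the $\bi\overline v$-shift moves the disk off $E$ without crossing $E_+$ or $E_-$, and that the case $\delta_O\supset B_1$ with $i=1$ cannot occur or is handled consistently.

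\emph{Step 2 (boundary contributions).} Fix a real point $w\in C\cap E_\R$ of multiplicity $(C\cdot E)_w=2k_w$. As in Lemma \ref{lem-sm}(2), the germ of $C_{sm}$ at $w$ moves off $E_\R$ into the adjacent real region. It then meets $E$ in $k_w$ pairs of complex conjugate points, of which exactly $k_w$ lie on $(C'_{sm})_+$. These $k_w$ points all lie in $E_+$ or all lie in $E_-$, according to the coherence of the complex orientations of $C_\R$ and $E_\R$ at $w$. Each contributes $\pm1$ to $(E_--E_+)\circ(C'_{sm})_+$, since two complex curves intersect positively. The required check is that, after the global sign, the contribution equals $\eps(C_+,w)$. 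The reversal of the convention between $B_1$ and $B_2$ in the definition of $\eps$ is exactly what accounts for $E_\pm$ inducing opposite complex orientations relative to the $\Sigma$-boundary $2B_2-2B_1$. This is a local model computation along $B_2$ and along $B_1$, analogous to the second bullet in the proof of Proposition \ref{lem-int}.

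Summing the contributions of Steps 1 and 2 gives $\ell_+(C_+)-\ell_-(C_+)+\sum_{w}\eps(C_+,w)(C\cdot E)_w/2$.
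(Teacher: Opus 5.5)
Your strategy is the one the paper has in mind: its proof only says the argument is ``completely similar'' to that of (\ref{eq-qi2}). Step~1 is fine up to one detail. In real holomorphic coordinates with $E=\{y=0\}$ near $B_i$, a point $p+\bi\overline u(p)$, $p\in X_\R$, of a disk shifted along $\bi\overline u$ lies on $E$ only if $p\in B_i$ and $\overline u(p)$ is tangent to $B_i$. This settles your worry about the $\bi\overline v$-shift. It also shows that when $\delta_O\supset B_i$ you must take $\overline v_O$ transversal to $B_i$, which is possible since its zero lies in $\delta_i$; otherwise the pushed disk picks up extra points on $E_\pm$. The configuration $B_1\subset\delta_O$ with $B_1$ the non-contractible branch cannot occur, because then $\delta_O\subset Y\setminus B_1$ by construction.

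The genuine gap is at the end of Step~2, and it cannot be closed. You have already established everything that happens at $w$.
\begin{itemize}
\item The $k_w$ points of $(C'_{sm})_+\cap E$ near $w$ lie in $E_+$ exactly when $C_\R$ and $E_\R$ are coherent at $w$. In the model $E=\{v=0\}$, $C'_{sm}=\{v=(u-u_0)^{2k_w}+\epsilon\}$, with both real parts oriented by $\partial_u$, both $(C'_{sm})_+$ and $E_+$ are $\{\Ima u>0\}$ near $w$.
\item Each of these points is a positive intersection.
\item ${\mathcal C}$ contains $-E_+$.
\end{itemize}
Hence $w$ contributes $+k_w$ to $-{\mathcal C}\circ\Arn^0(C_+)$ if the orientations are coherent and $-k_w$ if not, \emph{whether $w\in B_1$ or $w\in B_2$}. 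The chain $\Sigma$ does not meet $(C'_{sm})_+$ and plays no role near $w$. So the relation $\partial\Sigma=2B_2-2B_1$ cannot produce the reversed convention on $B_1$. Your sentence asserting that it does is not a verification, and actually carrying out the check gives $-\eps(C_+,w)k_w$ for $w\in B_1$. What your argument proves is
\[
-{\mathcal C}\circ\Arn^0(C_+)=\ell_+(C_+)-\ell_-(C_+)+\sum_{w\in C\cap B_2}\eps(C_+,w)k_w-\sum_{w\in C\cap B_1}\eps(C_+,w)k_w .
\]
This agrees with the stated formula only when the $B_1$-sum vanishes, for instance when $C$ has no tangency with $B_1$. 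No other local contribution exists that could supply the missing $-2\eps(C_+,w)k_w$ at $w\in B_1$. With $\eps$ defined on $B_1$ as in the statement, the two sides therefore differ by $2\sum_{w\in C\cap B_1}\eps(C_+,w)k_w$. For example, they differ by $2$ for the conic of Example~\ref{ex-gqi}, which has one simple tangency with $B_1$. No proof along these lines can give the statement as written, and that includes the paper's: the toric computation in Proposition~\ref{lem-int} only involves points of $\partial{\mathbf Q}(+,+)$, the analogue of $B_2$, where the two conventions coincide.
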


{\bf Proof}. The arguments are completely similar to the ones
used in the proof of the equality (\ref{eq-qi2}) in Proposition \ref{lem-int}. We skip the details.
\proofend

\begin{example}\label{ex-gqi}
{\rm We construct a series of oriented real curves in the projective plane demonstrating 
that the integral-valued and homological parts of the generalized quantum index $\widehat\QI_{X,E}(C_+)$
(see formula (\ref{e-gqi})) 
in general
do not determine each other.
Let $X=\PP^2$, and let $E$ be a smooth real cubic curve with a non-contractible (in $\PP^2_\R$)
real branch $B_1$ and an oval $B_2$. Consider a real conic $C_2$ that is quadratically tangent to $B_1$
at one point and to $B_2$ at two points. Clearly, $(C_2)_\R$ surrounds the oval $B_2$. 
By Proposition \ref{prop-generalized_qi}, 
the quantum index $\QI_{X,E}((C_2)_+)$ is zero whatever half of $C_2\setminus(C_2)_\R$ we choose,
while $(C_2)_\R$ represents a generator of $H_1(M)\simeq\Z$, where $M$ is the annulus between $B_1$ and $B_2$. We take
an oriented real curve $(C,C_+)\subset\PP^2$ satisfying conditions (I1)-(I3) with respect to $E$ and intersecting $C_2$ transversally so that $C_\R\cap (C_2)_\R\ne\emptyset$.
Then, we consider the union of $C$ with $k\ge1$ distinct real conics obtained from $C_2$ by small moves of its tangency points to $B_1$ and $B_2$ and equipped with arbitrarily chosen complex orientations. After smoothing out appropriate $k$ hyperbolic nodes in agreement with the complex orientations (see \cite{Fied}),
we obtain a series of (irreducible) oriented real curves of degree $\deg C+2k$
having the integral-valued quantum index equal to $\QI_{X,E}(C_+)$
and any number $l\in\{\QI^{hom}_{X,E}(C_+)-k+2j\}_{j=0}^k$ as the homological part of the generalized quantum index.}
\end{example}

{\ncsc Sorbonne Universit\'e and Universit\'e Paris Cit\'e, CNRS, IMJ-PRG \\[-21pt]

F-75005 Paris, France} \\[-21pt]

{\it E-mail address}: {\ntt     ilia.itenberg@imj-prg.fr}

\vskip10pt

{\ncsc School of Mathematical Sciences \\[-21pt]

Raymond and Beverly Sackler Faculty of Exact Sciences\\[-21pt]

Tel Aviv University \\[-21pt]

Ramat Aviv, 6997801 Tel Aviv, Israel} \\[-21pt]

{\it E-mail address}: {\ntt shustin@tauex.tau.ac.il}

\end{document}